\begin{document}
\def\R{\mathbb R}
\def\N{\mathbb N}
\def\Z{\mathbb Z}
\def\C{\mathbb C}
\def\Co{\mathcal{C}}
\def\F{\mathcal{F}}
\def\S{\mathcal{S}}
\def\1{\mathbf{1}}
\newtheorem{th-def}{Theorem-Definition}[section]
\newtheorem{theo}{Theorem}[section]
\newtheorem{lemm}[theo]{Lemma}
\newtheorem{prop}[theo]{Proposition}
\newtheorem{defi}[theo]{Definition}
\newtheorem{cor}[theo]{Corollary}
\newtheorem{exam}[theo]{Example}
\newtheorem{Rem}[theo]{Remark}
\newtheorem{Proof of}[theo]{Proof of main Theorem}
\def\dem{\noindent \textbf{Proof: }}
\def\rem{\indent \textsc{Remark. }}
\def\rems{\indent \textsc{Remarks. }}
\def\fin{  $\square$}
\def\ep{\varepsilon}
\def\uep{u^{\varepsilon}}
\def\Wep{W^{\varepsilon}}
\def\qep{a^{\varepsilon}}
\def\Sep{S^{\varepsilon}}
\def\Qep{Q^{\varepsilon}}
\def\Zep{Z^{\varepsilon}}
\def\vep{v^{\varepsilon}}
\def\wep{w^{\varepsilon}}
\def\fep{f^{\varepsilon}}
\def\gep{g^{\varepsilon}}
\def\hep{H^{\varepsilon}}
\def\aep{\alpha_{\varepsilon}}
\def\ph{\phi^{\ep}}
\def\X{\mathbb{X}}
\def\Y{\mathbb{Y}}
\def\H{\mathcal{H}}
\newcommand{\Node}[1]{\makebox[3mm]{#1}}
\def\cvput#1[#2]{\pnode(#1,1){#1} \pscircle*(#1,1){.1} \rput(#1,.5){$#2$}}
\def\vput#1{\cvput#1[#1]}

\newcommand{\edgebr}[1]{\makebox[3mm,linecolor=red]{#1}}

\numberwithin{equation}{section}
\title{Characterizations of some free random variables by properties of conditional moments of third degree polynomials}

\author{Wiktor Ejsmont}
\date{Mathematical Institute University of Wroclaw \\
pl. Grunwaldzki 2/4, 50-384 Wroc³aw, Poland\\
E-mail: wiktor.ejsmont@math.uni.wroc.pl}
\maketitle

{\small
\textbf{Abstract}. We 
 investigate  Laha-Lukacs properties  of noncommutative random variables (processes). We prove that  some families of free Meixner distributions can be characterized by the conditional moments of polynomial functions of degree 3. We also show that this fact has  consequences  in describing  some free L\'evy processes. The proof relies on a combinatorial identity. 
 At the end of this paper we show that this result can be extended to a $q$-Gausian variable. 
\\ \\
{{\bf Key words:} free Meixner law, conditional expectation, free cumulants, Laha-Lukacs theorem, noncommutative  regression. }\\ \\
\textbf{AMS  Subject Classification:}. 46L54  , 46L53.} \\
\\
\newpage
\section{Introduction}
The original motivation for this paper comes from a desire to understand the  results about the conditional expectation which were shown in  \cite{BoBr}, \cite{Br}, \cite{Ejs} and   \cite{SzWes}. 
  They   proved, that the first conditional linear  moment and 
 conditional quadratic variances  characterize free Meixner laws (Bo\.zejko and Bryc \cite{BoBr}, Ejsmont \cite{Ejs}). 
Laha-Lukacs type characterizations of random variables in free probability are also studied by Szpojankowski,  Weso\l owski \cite{SzWes}.
 They give a characterization of noncommutative
free-Poisson and free-Binomial variables by properties of the first two conditional moments,
which mimics Lukacs type assumptions known from classical probability. 
In this paper we show that free Meixner variables can be characterized by the third degree polynomial.
In particular,  we apply this result to describe a characterization of free L\'evy processes.

 In the last part of the paper we also show that these properties are also true for $q$-Gaussian variables. 
It is worthwhile to mention the work of Bryc \cite{Br}, where the Laha-Lukacs property for $q$-Gaussian processes was shown. Bryc proved that $q$-Gaussian processes have linear regressions and quadratic conditional variances. 

The paper is organized as follows. In section 2 we review basic free probability and free Meixner laws. We also establish a combinatorial identity used in the proof of the main theorem.  In section 3 we  proof our main theorem about the characterization of free Meixner distribution  by the conditional moments of polynomial functions of degree 3. In particular, we 
apply this result to describe a characterization of free L\'evy processes (and some property of this  processes).  Finally, in Section 4 we  compile some basic facts about a $q$-Gausian variable and we  show that the main result from Section 3 can be extended to a $q$-Gausian variable. 

\section{Free Meixner laws, free cumulants, conditional
expectation}
Classical Meixner distributions first appeared in the theory of orthogonal polynomials in the paper of Meixner \cite{Meix}. In free probability the  Meixner systems of polynomials were introduced by Anshelevich \cite{An1}, Bo\.zejko, Leinert, Speicher  \cite{BoLR} and Saitoh and Yoshida \cite{SY}. They showed  that the free Meixner system can be classified into six types of laws: the Wigner semicircle, the free Poisson, the free Pascal (free negative binomial), the free Gamma, a law that we will call pure free Meixner and the free binomial law. 

We assume that our probability space is a  von Neumann algebra $\mathcal{A}$ with a normal faithful tracial state $\tau:\mathcal{A} \to \mathbb{C}$ i.e., $\tau(\cdot)$ is linear, continuous in weak* topology, $\tau(\X \Y)=\tau(\Y \X)$,  $\tau(\mathbb{I})=1$, $\tau(\X\X^*)\geq 0$ and $\tau(\X \X^{*}) = 0$ implies $\X = 0$ for all $\X,\Y \in \mathcal{A}$.
 A (noncommutative) random variable $\mathbb{X}$ is a self-adjoint (i.e. $ \mathbb{X}=\mathbb{X}^*$) element of $\mathcal{A}$.  
We are interested in the two-parameter family of compactly supported probability measures (so that their moments does not grow faster than exponentially) $\{\mu_{a,b}: a \in \mathbb{R},b\geq -1 \} $ with the Cauchy-Stieltjes transform given by the formula

\begin{align}
G_\mu(z)=\int_{\mathbf{R}}\frac{1}{z-y}\mu_{a,b}(dy)=\frac{(1 + 2b)z + a -\sqrt{(z - a)^2 - 4(1 + b)}}{2(bz^2 + az + 1)}, \label{eq:transformataMixner}
\end{align}
\\
where the branch of the analytic square root should be determined by the condition
that $\Im(z)>0\Rightarrow \Im(G_\mu(z))\leqslant 0$ (see \cite{SY}). Cauchy-Stieltjes transform of $\mu$ is a function $G_\mu$ defined on the upper half
plane $\mathbb{C}^+=\{s+ti|s,t\in \mathbf{R}, t>0\}$ and takes values in the lower half plane $\mathbb{C}^-=\{s+ti|s,t\in \mathbf{R}, t\leq 0\}$. 
\\ Equation (\ref{eq:transformataMixner}) describes the distribution  with the mean equal to zero and the variance equal to one (see \cite{SY}).   The moment generating function,   which
 corresponds to the equation (\ref{eq:transformataMixner}),  has the form 

\begin{align}
M(z)=\frac{1}{z}G_\mu(\frac{1}{z})=\frac{1 + 2b + az -\sqrt{(1- za)^2 - 4z^2(1 + b)}}{2(z^2 + az + b)},  \label{eq:generujacamomenty}
\end{align}
for $|z|$ small enough. 
\\
\\
\noindent 
Let $\mathbb{C} \langle \mathbb{X}_{1},\dots ,\mathbb{X}_{n} \rangle$ denote the non-commutative ring of polynomials in variables $\mathbb{X}_{1},\dots ,\mathbb{X}_{n}$.
The free  cumulants are the $k$-linear maps $R_{k} : \mathbb{C} \langle \mathbb{X}_{1},\dots ,\mathbb{X}_{k} \rangle  \to\mathbb{C}$ defined  by the recursive  formula (connecting them with mixed moments)
\begin{align}
\tau(\mathbb{X}_{1}\mathbb{X}_{2}\dots \mathbb{X}_{n}) = \sum_{\nu \in NC(n)}R_{\nu}(\mathbb{X}_{1},\mathbb{X}_{2},\dots ,\mathbb{X}_{n}),\label{eq:DefinicjaKumulant}
\end{align}
where 
\begin{align}
R_{\nu}(\mathbb{X}_{1},\mathbb{X}_{2},\dots ,\mathbb{X}_{n}):=\Pi_{B \in \nu}R_{|B|}(\mathbb{X}_{i}:i \in B) 
\end{align}
and $NC(n)$ is the set of all non-crossing partitions of $\{1, 2,\dots, n \}$ (see \cite{NS,R}). Sometimes we will write $R_{k}(\mathbb{X})=R_{k}( \mathbb{X},\dots ,\mathbb{X} )$. 

\noindent The $\mathcal{R}$-transform of a random variable $\mathbb{X}$ is $\mathcal{R}_{\mathbb{X}}(z)=\sum_{i=0}^{\infty}R_{i+1}(\mathbb{X})z^i$, where  $R_{i}(\mathbb{X})$ is a sequences defined by (\ref{eq:DefinicjaKumulant})  (see \cite{BF} for more details).  For reader's convenience we recall that the $\mathcal{R}$-transform  corresponding to $M(z)$ which is equal to
   \begin{align}
\mathcal{R}_{\mu}(z)= \frac{2z}{1- za +\sqrt{(1- za)^2 - 4z^2b}}, \label{eq:RTransormataGlowna} 
\end{align}
where the analytic square root is chosen so that $\lim_{z \to 0}\mathcal{R}_{\mu}(z)=0$ (see \cite{SY}). If $\mathbb{X}$ has the distribution $\mu_{a,b}$, then sometimes we will write $\mathcal{R}_{\mathbb{X}}$ for the $\mathcal{R}$-transform of $\X$ . For particular values of $a$ and $b$ the law of $\X$ is:
\begin{itemize}
\item the Wigner's semicircle law if $a = b = 0$;
\item  the free Poisson  law if $b = 0$ and $a \neq 0$;
\item  the free Pascal (negative binomial) type law if $b > 0$ and $a^2 > 4b$;
\item  the free Gamma  law if $b > 0$ and $a^2 = 4b$;
\item the pure free Meixner  law if $b > 0$ and $a^2 < 4b$;
\item the free binomial  law  $-1\leq b < 0$.
\end{itemize}
\begin{defi}
\noindent Random variables $ \mathbb{X}_{1},\dots ,\mathbb{X}_{n} $  are freely independent (free) if, for every $n \geq 1$ and every non-constant choice of $\mathbb{Y}_{i} \in \{ \mathbb{X}_{1},\dots ,\mathbb{X}_{n}  \}$, where $i \in \{1,\dots,k\}$ (for each  $k=1,2,3\dots$) we get $R_{k}( \mathbb{Y}_{1},\dots ,\mathbb{Y}_{k} )=0$. 
\end{defi}

\noindent The $\mathcal{R}$-transform linearizes the free convolution, i.e. if $\mu$ and $\nu$ are (compactly supported) probability measures on $\mathbf{R}$, then we have 
\begin{align}
\mathcal{R}_{\mu \boxplus \nu}=\mathcal{R}_{\mu} + \mathcal{R}_{\nu},
\end{align}
where $\boxplus$ denotes the free convolution (the free convolution $\boxplus$  of measures ƒ$\mu,\nu$ is the law of $\X+\Y$ where
$\X,\Y$ are free and have laws ƒ$\mu,\nu$ respectively). For more details about free convolutions and free probability theory, the reader can consult \cite{NS,ViculecuDykemaNica}.
\\ \\
If $\mathcal{B} \subset \mathcal{A}$ is a von Neumann subalgebra and $\mathcal{A}$ has a trace $\tau$, then there exists a unique conditional
expectation from $\mathcal{A}$ to $\mathcal{B}$ with respect to $\tau$, which we denote by $\tau(\cdot|\mathcal{B})$. This map
is a weakly continuous, completely positive, identity preserving, contraction and it is characterized by the property that, for any $\mathbb{X} \in \mathcal{A}$, $\tau(\X\Y) = \tau(\tau(\X|\mathcal{B})\Y )$ for any $\mathbb{Y} \in \mathcal{B}$ (see \cite{Bian2,T}). For fixed $\X \in \mathcal{A}$ by $\tau(\cdot|\X)$ we denote the conditional expectation corresponding to the von Neumann algebra $\mathcal{B}$ generated by $\X$ and $\mathbb{I}$. 
The following lemma has been proven in \cite{BoBr}. 
\begin{lemm}
Let $\mathbb{W}$ be a (self-adjoint) element of the von Neumann algebra $\mathcal{A}$, generated by a self-adjoint $\mathbb{V} \in \mathcal{A}$. If, for all $n\geq  1$ we have $\tau(\mathbb{U}\mathbb{V}^n) =\tau(\mathbb{W}\mathbb{V}^n)$, then
\begin{align}
\tau(\mathbb{U}|\mathbb{V}) = \mathbb{W}.
\end{align} 
\label{lem:1}
 \end{lemm}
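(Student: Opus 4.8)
The plan is to reduce the claim to the defining property of the conditional expectation recalled just above, and then to upgrade an identity that is known only for the powers $\mathbb{V}^{n}$ to the whole generated algebra by a density argument. Write $\mathcal{B}$ for the von Neumann algebra generated by $\mathbb{V}$ and $\mathbb{I}$, so that $\tau(\cdot|\mathbb{V})=\tau(\cdot|\mathcal{B})$ and $\mathbb{W}\in\mathcal{B}$ by hypothesis. Recall that $E:=\tau(\cdot|\mathcal{B})$ takes values in $\mathcal{B}$ and is characterized by $\tau(\mathbb{X}\mathbb{Y})=\tau(E(\mathbb{X})\mathbb{Y})$ for every $\mathbb{X}\in\mathcal{A}$ and every $\mathbb{Y}\in\mathcal{B}$. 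Hence $\mathbb{Z}:=E(\mathbb{U})-\mathbb{W}$ lies in $\mathcal{B}$, and for every $\mathbb{Y}\in\mathcal{B}$ one has $\tau(\mathbb{Z}\mathbb{Y})=\tau(E(\mathbb{U})\mathbb{Y})-\tau(\mathbb{W}\mathbb{Y})=\tau(\mathbb{U}\mathbb{Y})-\tau(\mathbb{W}\mathbb{Y})$. Therefore $E(\mathbb{U})=\mathbb{W}$ will follow once I show that $\tau(\mathbb{U}\mathbb{Y})=\tau(\mathbb{W}\mathbb{Y})$ for all $\mathbb{Y}\in\mathcal{B}$: indeed, choosing $\mathbb{Y}=\mathbb{Z}^{*}\in\mathcal{B}$ then gives $\tau(\mathbb{Z}\mathbb{Z}^{*})=0$, whence $\mathbb{Z}=0$ by faithfulness of $\tau$. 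Equivalently, in the GNS space $L^{2}(\mathcal{A},\tau)$ with cyclic vector $\Omega$, the map $E$ is the orthogonal projection onto $\overline{\mathcal{B}\Omega}$, and the claim amounts to $\mathbb{U}\Omega-\mathbb{W}\Omega\perp\overline{\mathcal{B}\Omega}$.

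Next I would feed in the hypothesis. Since $\mathbb{V}=\mathbb{V}^{*}$, the $*$-algebra generated by $\mathbb{V}$ and $\mathbb{I}$ is exactly the polynomial algebra $\mathbb{C}[\mathbb{V}]$. By the trace property $\tau(\mathbb{U}\mathbb{V}^{n})=\tau(\mathbb{V}^{n}\mathbb{U})$, the assumption $\tau(\mathbb{U}\mathbb{V}^{n})=\tau(\mathbb{W}\mathbb{V}^{n})$ for all $n\geq 1$, together with the $n=0$ identity $\tau(\mathbb{U})=\tau(\mathbb{W})$ (needed to control the constant term, and which I read into the statement), gives by linearity $\tau(\mathbb{U}\,p(\mathbb{V}))=\tau(\mathbb{W}\,p(\mathbb{V}))$ for every polynomial $p$. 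Thus the two sides agree on the weak$^{*}$-dense subalgebra $\mathbb{C}[\mathbb{V}]$ of $\mathcal{B}$.

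The main step, and the only non-formal one, is to pass from polynomials to all of $\mathcal{B}$. Define $\varphi(\mathbb{Y})=\tau(\mathbb{U}\mathbb{Y})$ and $\psi(\mathbb{Y})=\tau(\mathbb{W}\mathbb{Y})$ on $\mathcal{A}$. Because $\tau$ is normal (weak$^{*}$-continuous) and left multiplication by a fixed element is weak$^{*}$-continuous, both $\varphi$ and $\psi$ are normal functionals. By the bicommutant theorem $\mathcal{B}$ is the weak$^{*}$-closure of $\mathbb{C}[\mathbb{V}]$, so any $\mathbb{Y}\in\mathcal{B}$ is a weak$^{*}$-limit of a bounded net of polynomials in $\mathbb{V}$ (Kaplansky density); since $\varphi$ and $\psi$ are weak$^{*}$-continuous and agree on those polynomials, they agree at $\mathbb{Y}$. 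This yields $\tau(\mathbb{U}\mathbb{Y})=\tau(\mathbb{W}\mathbb{Y})$ for all $\mathbb{Y}\in\mathcal{B}$ and closes the argument. I expect the delicate points to be precisely this weak$^{*}$-density/normality passage (equivalently, the $L^{2}$-density of $\{p(\mathbb{V})\Omega\}$ in $\overline{\mathcal{B}\Omega}$, which rests on $\mathbb{V}$ being bounded so that its distribution is compactly supported and determined by its moments), together with ensuring the constant term is pinned down by the $n=0$ instance of the hypothesis.
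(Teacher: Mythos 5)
The paper does not actually prove this lemma: it is quoted from Bo\.zejko--Bryc \cite{BoBr} with a bare citation, so there is no in-text argument to compare yours against. Your proof is correct and is essentially the standard (and the cited) argument: reduce the claim to $\tau(\mathbb{U}\mathbb{Y})=\tau(\mathbb{W}\mathbb{Y})$ for all $\mathbb{Y}$ in the von Neumann algebra $\mathcal{B}$ generated by $\mathbb{V}$ and $\mathbb{I}$, use the defining property of $\tau(\cdot|\mathcal{B})$ together with faithfulness of $\tau$ (taking $\mathbb{Y}=\mathbb{Z}^{*}$), and pass from the monomials $\mathbb{V}^{n}$ to all of $\mathcal{B}$ by normality of $\mathbb{Y}\mapsto\tau(\mathbb{U}\mathbb{Y})$ and Kaplansky density; the equivalent $L^{2}(\mathcal{A},\tau)$ picture you sketch (orthogonal projection onto $\overline{\mathcal{B}\Omega}$) is the form in which \cite{BoBr} phrase it.

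One point you raise deserves emphasis rather than a parenthesis: the hypothesis as printed runs only over $n\geq 1$, and you are right that the $n=0$ identity $\tau(\mathbb{U})=\tau(\mathbb{W})$ must be added to pin down the constant term. This is not merely a convenience --- without it the statement is genuinely false. For instance, if $\mathbb{V}=p$ is a projection with $0<\tau(p)<1$, then $\mathbb{V}^{n}=p$ for all $n\geq 1$, so $\mathbb{U}=\mathbb{I}-p$ and $\mathbb{W}=0$ satisfy $\tau(\mathbb{U}\mathbb{V}^{n})=0=\tau(\mathbb{W}\mathbb{V}^{n})$ for all $n\geq 1$, yet $\tau(\mathbb{U}|\mathbb{V})=\mathbb{I}-p\neq\mathbb{W}$. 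So the lemma should be read with $n\geq 0$; in every application in this paper the moment identities are in fact verified for all $n\geq 0$, so nothing downstream is affected. With that reading, your proof is complete.
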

\noindent 
We introduce the notation  
\begin{itemize}
\item $NC(n)$ is the set of all non-crossing partitions of $\{1, 2,\dots , n \}$,
\item $NC^k(m)$ is the set of all non-crossing partitions of $\{1, 2,\dots , m\}$ (where $m\geq k\geq 1$) which have first $k$ elements in the same block. For example for  $k=3$ and $m=5$, see Figure \ref{fig:FiguraExemple1}.
\end{itemize}
\begin{figure}[h]
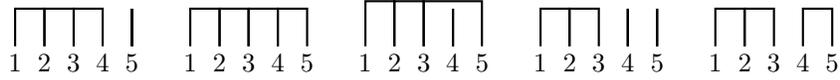

\begin{center}
\vspace{1cm}
\psset{nodesep=3pt}
\psset{angle=90}
\rnode{A}{\Node{$1$}} \rnode{B}{\Node{$2$}} \rnode{C}{\Node{$3$}}
\rnode{D}{\Node{$4$}} \rnode{E}{\Node{$5$}}
\ncbar[armA=.5]{A}{B}
\ncbar[armA=.5]{B}{C}
\ncbar[armA=.5]{C}{D}
\ncbar[armA=.5]{E}{E}
\hspace{3mm}
\rnode{A}{\Node{$1$}} \rnode{B}{\Node{$2$}} \rnode{C}{\Node{$3$}}
\rnode{D}{\Node{$4$}} \rnode{E}{\Node{$5$}}
\ncbar[armA=.5]{A}{B}
\ncbar[armA=.5]{B}{C}
\ncbar[armA=.5]{C}{D}
\ncbar[armA=.5]{D}{E}
\hspace{3mm}
\rnode{A}{\Node{$1$}} \rnode{B}{\Node{$2$}} \rnode{C}{\Node{$3$}}
\rnode{D}{\Node{$4$}} \rnode{E}{\Node{$5$}}
\ncbar[armA=.6]{A}{B}
\ncbar[armA=.6]{B}{C}
\ncbar[armA=.5]{D}{D}
\ncbar[armA=.6]{C}{E}
\hspace{3mm}
\rnode{A}{\Node{$1$}} \rnode{B}{\Node{$2$}} \rnode{C}{\Node{$3$}}
\rnode{D}{\Node{$4$}} \rnode{E}{\Node{$5$}}
\ncbar[armA=.5]{A}{B}
\ncbar[armA=.5]{B}{C}
\ncbar[armA=.5]{D}{D}
\ncbar[armA=.5]{E}{E}
\hspace{3mm}
\rnode{A}{\Node{$1$}} \rnode{B}{\Node{$2$}} \rnode{C}{\Node{$3$}}
\rnode{D}{\Node{$4$}} \rnode{E}{\Node{$5$}}
\ncbar[armA=.5]{A}{B}
\ncbar[armA=.5]{B}{C}
\ncbar[armA=.5]{D}{E}

\caption{Non-crossing partitions of $\{1, 2,3 , 4,5\}$ with the first $3$ elements in the same block.}
\label{fig:FiguraExemple1}
\end{center}
\end{figure}

The following lemma is a generalization of the Lemma 2.4 in \cite{Ejs} (the proof is also similar)

\begin{lemm}
Suppose that $\mathbb{Z}$ is a element of $ \mathcal{A}$, $m_{i}=\tau(\mathbb{Z}^{i})$ and $n,k\geqslant 1$.  Then
\begin{align}
\sum_{\nu \in NC^k(n+k)}R_{\nu}(\mathbb{Z})=\sum_{i=0}^{n-1}m_{i}\sum_{\nu \in NC^{k+1}(k+n-i)}R_{\nu}(\mathbb{Z})+R_k(\Z)m_{n}.
\end{align} 
\label{lem:2}
 \end{lemm}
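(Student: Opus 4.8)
The plan is to prove the identity by a direct structural decomposition of the index set $NC^k(n+k)$, rather than by induction. Fix a partition $\nu \in NC^k(n+k)$ and let $B_0$ denote the (unique) block containing the first $k$ elements $1,\dots,k$. The whole sum $\sum_{\nu\in NC^k(n+k)}R_\nu(\Z)$ will be organized according to the position of the \emph{first element of $B_0$ lying beyond $k$}. Concretely, either $B_0=\{1,\dots,k\}$, or there is a smallest index $p>k$ with $p\in B_0$; in the latter case set $i=p-k-1\in\{0,1,\dots,n-1\}$ for the number of positions strictly between $k$ and $p$.

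First I would treat the case $B_0=\{1,\dots,k\}$, where the remaining points $k+1,\dots,k+n$ carry an arbitrary non-crossing partition, independent of $B_0$; since $R_\nu(\Z)$ factorizes over blocks as $R_\nu(\Z)=\prod_{B\in\nu}R_{|B|}(\Z)$, summing over these configurations and using the moment--cumulant relation (\ref{eq:DefinicjaKumulant}) in the form $\sum_{\sigma\in NC(n)}R_\sigma(\Z)=\tau(\Z^n)=m_n$ produces exactly the term $R_k(\Z)m_n$. In the remaining case the crucial observation is that, because $\nu$ is non-crossing and $B_0$ contains both $k$ and $p$, no block other than $B_0$ can have points both inside the gap $\{k+1,\dots,p-1\}$ and in $\{p,\dots,k+n\}$: such a block would cross $B_0$. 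Hence $\nu$ splits cleanly into its restriction to the gap $\{k+1,\dots,p-1\}$, which is an arbitrary member of $NC(i)$, and its restriction to $\{1,\dots,k\}\cup\{p,\dots,k+n\}$.

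The second restriction, after relabeling its $k+n-i$ points in increasing order, is an arbitrary element of $NC^{k+1}(k+n-i)$, because $1,\dots,k,p$ now become the first $k+1$ points and they all lie in $B_0$. Since $R_\nu(\Z)$ factorizes as the product of the contribution of the gap partition and that of the relabeled remainder, summing first over the gap partitions (which yields $m_i$ by (\ref{eq:DefinicjaKumulant})) and then over the remainder gives the summand $m_i\sum_{\nu\in NC^{k+1}(k+n-i)}R_\nu(\Z)$. Letting $i$ run over $0,\dots,n-1$ (with $m_0=\tau(\mathbb{I})=1$ handling the case $p=k+1$, where the gap is empty) and adding the terminal term $R_k(\Z)m_n$ reproduces the right-hand side.

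I expect the main obstacle to be the careful verification of the two bijective/factorization claims: that the non-crossing condition forces the clean separation of blocks at the gap (so that no block straddles $p$), and that the relabeling of the remainder is genuinely a bijection onto $NC^{k+1}(k+n-i)$ preserving the value of $R_\nu(\Z)$. Once these are in place, the summations and the identifications with $m_i$ and $m_n$ via the moment--cumulant formula are routine.
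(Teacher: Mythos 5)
Your proposal is correct and follows essentially the same route as the paper: the paper's proof also splits off the case where the block of $1,\dots,k$ is exactly $\{1,\dots,k\}$ (yielding $R_k(\Z)m_n$), and otherwise classifies partitions by $s(\nu)=\min\{j>k: j\in B_1\}$ (your $p$), identifying each class with $NC(j-k-1)\times NC^{k+1}(n+2k-j+1)$, which is precisely your gap-plus-relabeled-remainder factorization with $i=j-k-1$. No substantive difference.
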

\begin{Rem}
Note that in Lemma \ref{lem:2} we could only assume that $\mathbb{Z}$ is an element in a complex unital algebra $\mathcal{A}$  endowed with a linear function $\tau:\mathcal{A} \to \mathbb{C}$ satisfying $\tau(\mathbb{I})=1$.
\end{Rem}

\noindent  \textit{Proof of Lemma \ref{lem:2}.}
First, we consider partitions 
$\pi \in NC^k(n + k)$ with $\pi= \{V_1, \dots,V_s\}$ where $V_1=\{1,\dots,k\}$. The class of all, such  $\pi$ we will denote $ NC^k_0(n + k)$. It is clear, that the  sum of all  non-crossing partitions of this form corresponds to the term $R_k(\Z)m_{n}$.

On the other hand, for   $\nu \in NC^k(n + k)\backslash NC^k_0(n + k)$  
denote $s(\nu)=min\{j:j>k,j\in B_1\}$ where $ B_1$ is the block of $\nu $ which contains $1,\dots,k$.  
This decomposes $NC^k(n+k)$ into the $n$ classes $NC^k_{j}(n+k)= \{\nu \in NC^k(n+k) : s(\nu) = j\}, \textrm{ } j = k+1, \dots , n+k$. The set $ NC^k_{j}(n+k)$ can be identified with the product $NC(j-k-1)\times NC^{k+1}(n+2k-j+1)$ with convention that $NC(0)=\{\varnothing\}$. 
Indeed, the blocks of  $\nu \in NC^k_j(n+k)$, which partitions are the elements of $\{k+1, k+2, k+3, \dots ,j-1 \}$,  can be  identified with an appropriate partitions
in $NC(j-1-k)$, and (under the additional constraint that the first $k+1$ elements $1,\dots,k,j$ are in the same block)  the remaining blocks, which are partitions of the  set $\{1,\dots,k,j, j+1, . . . , n+k\}$,  can be uniquely identified with a partitions in $ NC^{k+1}(n+2k-j+1)$. The above situation is illustrated in  Figure \ref{fig:FiguraExemple2}.
\vspace{1 cm}
\begin{figure}[h]
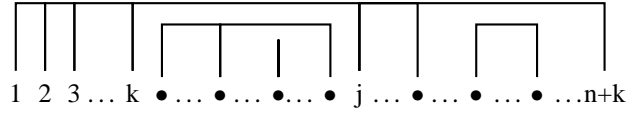

\begin{center}
\vspace{1cm}
\psset{nodesep=5pt}
\psset{angle=90}
\hspace{20mm}
\rnode{A}{\Node{1}} \rnode{A1}{\Node{2}}  \rnode{A2}{\Node{3}} \rnode{B}{\Node{\ldots}} \rnode{C1}{\Node{k}}
\rnode{D1}{\Node{$\bullet$}} \rnode{D5}{\Node{\ldots}} \rnode{D2}{\Node{$\bullet$}} \rnode{D5}{\Node{\ldots}} \rnode{D3}{\Node{$\bullet$}}\rnode{D5}{\Node{\ldots}} \rnode{D4}{\Node{$\bullet$}} \rnode{E}{\Node{j}} \rnode{E1}{\Node{\ldots}}  \rnode{E6}{\Node{$\bullet$}}  \rnode{E5}{\Node{\ldots}}  \rnode{E2}{\Node{$\bullet$}} \rnode{E3}{\ldots}\rnode{E7}{\Node{$\bullet$}}  \rnode{E3}{\ldots}  \rnode{F}{\Node{\textrm{ n+k }}} 
\ncbar[armA=.9]{A}{A1}
\ncbar[armA=.9]{A1}{A2}
\ncbar[armA=.9]{A2}{C1}
\ncbar[armA=.9]{C1}{E}
\ncbar[armA=.9]{E}{F}
\ncbar[armA=.9]{E}{E6}
\ncbar[armA=.7]{D1}{D2}
\ncbar[armA=.5]{D3}{D3}
\ncbar[armA=.7]{D2}{D4}
\ncbar[armA=.7]{E2}{E7}
\caption{The main structure of non-crossing partitions of $\{1, 2,3 ,\dots,n+k\}$ with the first $k$ elements in the same block.}
\label{fig:FiguraExemple2}
\end{center}
\end{figure}

\noindent This gives the formula 

\begin{align}
\sum_{\nu \in NC^k(n+k)}R_{\nu}(\mathbb{Z}) =\sum_{j=k+1}^{n+k}\sum_{\nu \in NC(j-k-1)}R_{\nu}(\mathbb{Z})\sum_{\nu \in NC^{k+1}(n+2k-j+1)}R_{\nu}(\mathbb{Z}) +R_k(\Z)m_{n} .\nonumber \end{align} 
Now we rewrite the last sum based on the value of $i=j-k-1$ where $i\in \{0,\dots,n-1 \}$. Thus, we have
\begin{align}
\sum_{\nu \in NC^k(n+k)}R_{\nu}(\mathbb{Z})&=\sum_{i=0}^{n-1}\sum_{\nu \in NC(i)}R_{\nu}(\mathbb{Z})\sum_{\nu \in NC^{k+1}(n+k-i)}R_{\nu}(\mathbb{Z}) +R_k(\Z)m_{n} \nonumber \\ &
=\sum_{i=0}^{n-1}m_{i}\sum_{\nu \in NC^{k+1}(k+n-i)}R_{\nu}(\mathbb{Z})+R_k(\Z)m_{n},
\end{align} 
which proves the lemma. 
\begin{flushright} $\square$ \end{flushright}

Let $\mathbb{Z}$ be the self-adjoint element of the von Neumann algebra $\mathcal{A}$  from the above lemma. We define $c^k_n=c^k_n(\mathbb{Z})=\sum_{\nu \in NC^k(n+k)}R_{\nu}(\mathbb{Z})$ and 
the following functions (power series):
\begin{align} C^k(z)=\sum_{n=0}^\infty c^k_nz^{k+n} \textrm{ where } k\geq 1 \end{align}
for sufficiently small $|z|<\epsilon$ and $z \in \mathbb{C}$. This series is convergent because we consider compactly supported probability measures, so moments and cumulants do not grow faster than exponentially (see \cite{BF}). This implies that $c^k_n$ also does not grow faster than exponentially. 
\begin{lemm}
Let $\mathbb{Z}$ be a (self-adjoint) element of the von Neumann algebra $\mathcal{A}$   then 
\begin{align}
C^{(k)}(z)=M(z)C^{(k+1)}(z)+R_k(\Z)z^kM(z)
\end{align} 
where $k\geqslant 1$.
\label{lem:4} 
 \end{lemm}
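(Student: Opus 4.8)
The plan is to read off the recursion of Lemma~\ref{lem:2} at the level of Taylor coefficients and then recognize the resulting convolution as a product of power series. First I would rewrite Lemma~\ref{lem:2} in the $c^k_n$ notation. Since $c^{k+1}_m=\sum_{\nu\in NC^{k+1}(m+k+1)}R_{\nu}(\Z)$, the substitution $m=n-i-1$ identifies $\sum_{\nu\in NC^{k+1}(k+n-i)}R_{\nu}(\Z)=c^{k+1}_{n-i-1}$, so that for every $n\geq 1$ the lemma reads
\begin{align}
c^k_n=\sum_{i=0}^{n-1}m_i\,c^{k+1}_{n-i-1}+R_k(\Z)m_n, \nonumber
\end{align}
where $m_i=\tau(\Z^i)$ and $M(z)=\sum_{i=0}^\infty m_iz^i$ is the moment generating function of $\Z$, so in particular $m_0=1$.

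Next I would isolate the $n=0$ term, which is not covered by Lemma~\ref{lem:2}. Because $NC^k(k)$ consists of only the single full block $\{1,\dots,k\}$, one has $c^k_0=R_k(\Z)$. Splitting $C^{(k)}(z)=c^k_0z^k+\sum_{n\geq 1}c^k_nz^{k+n}$ and inserting the coefficient recursion into the tail produces two sums. The contribution $R_k(\Z)\sum_{n\geq 1}m_nz^{k+n}=R_k(\Z)z^k\bigl(M(z)-1\bigr)$ is immediate from the definition of $M$.

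The heart of the argument is to see the double sum $\sum_{n\geq 1}\sum_{i=0}^{n-1}m_i\,c^{k+1}_{n-i-1}z^{k+n}$ as a Cauchy product. Setting $j=n-i-1$ lets $i,j$ range independently over $\{0,1,2,\dots\}$, and since $z^{k+n}=z^i\cdot z^{k+1+j}$ the sum factors as $\bigl(\sum_i m_iz^i\bigr)\bigl(\sum_j c^{k+1}_jz^{k+1+j}\bigr)=M(z)C^{(k+1)}(z)$. Collecting the three pieces yields
\begin{align}
C^{(k)}(z)=R_k(\Z)z^k+M(z)C^{(k+1)}(z)+R_k(\Z)z^k\bigl(M(z)-1\bigr), \nonumber
\end{align}
and the two $R_k(\Z)z^k$ terms cancel, leaving the claimed identity.

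The only delicate point is the index bookkeeping: keeping the shift $m=n-i-1$ consistent with the definition of $C^{(k+1)}$, and treating the $n=0$ contribution by hand since Lemma~\ref{lem:2} is stated for $n\geq 1$. Convergence of all the series for small $|z|$ is guaranteed by the exponential bound on $c^k_n$ recorded just before the statement, so rearranging the double sum into a Cauchy product is legitimate.
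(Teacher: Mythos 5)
Your proposal is correct and follows essentially the same route as the paper: split off the $n=0$ term, substitute the coefficient recursion of Lemma~\ref{lem:2}, and recognize the double sum as the Cauchy product $M(z)C^{(k+1)}(z)$. You merely make explicit two steps the paper leaves implicit, namely $c^k_0=R_k(\Z)$ and the cancellation of the two $R_k(\Z)z^k$ terms.
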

\begin{proof}
It is clear from Lemma \ref{lem:2} that we have
\begin{align} C^{(k)}(z)&=\sum_{n=0}^\infty c^k_n(\mathbb{Z})z^{k+n}=c^k_0(\mathbb{Z})z^{k}+\sum_{n=1}^\infty c^k_n(\mathbb{Z})z^{k+n}\nonumber \\& = c^k_0(\mathbb{Z})z^{k}+\sum_{n=1}^\infty [\sum_{i=0}^{n-1}m_{i}c^{k+1}_{n-i-1}(\mathbb{Z}) +R_k(\Z)m_n]z^{k+n}
 \nonumber \\&
= c^k_0(\mathbb{Z})z^{k}+\sum_{n=1}^\infty \sum_{i=0}^{n-1}m_{i}z^{i}c^{k+1}_{n-i-1}(\mathbb{Z})z^{k+n-i} +R_k(\Z)z^{k}\sum_{n=1}^\infty m_nz^{n}\nonumber \\& =M(z)C^{(k+1)}(z)+R_k(\Z)z^kM(z),
 \end{align}
which proves the lemma. 
\end{proof}
\begin{exam}  \noindent For $k=1$, we get:
\begin{align}
 C^{(1)}(z)=M(z)-1=M(z)C^{(2)}(z)+R_1(\Z)zM(z). \label{eq:exemK=1} 
 \end{align}\\
Particularly, we have the coefficients of the power series $1/M(z)$ (Maclaurin series): 
\begin{align}
 \frac{1}{M(z)}=1-C^{(2)}(z)-R_1(\Z)z \label{eq:exemK=12} 
 \end{align}
for sufficiently small $|z|$. \\
 Similarly, by putting $k=2$, we obtain:
\begin{align} 
C^{(2)}(z)=M(z)C^{(3)}(z)+R_2(\Z)z^2M(z). \label{eq:exemK=2} 
 \end{align}\end{exam}
\noindent Now we present  Lemma 4.1 of \cite{BoBr}, which will be used in the proof of the main theorem in order to calculate the moment generating function of free convolution.
\begin{lemm}[]
Suppose that $\mathbb{X}$,$\mathbb{Y}$  are free, self-adjoint and  $\mathbb{X}/\sqrt{\alpha}$,$\mathbb{Y}/\sqrt{\beta}$ have  the free Meixner  laws $\mu_{a/\sqrt{\alpha},b/\alpha}$ and  $\mu_{a/\sqrt{\beta},b/\beta}$ respectively, where $\alpha,\beta > 0 $, $\alpha+\beta =1 $ and $a \in \mathbb{R},b\geq -1$. Then  the moment generating function 
 $M(z)$ for $\X+\Y$ satisfies the following quadratic equation
\begin{align} (z^2 + az + b)M^2(z) - (1 + az + 2b)M(z) + 1 + b = 0. \label{eq:TransMomentSplot} \end{align}
\label{lem:3}
\end{lemm}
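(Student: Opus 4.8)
The plan is to exploit the two structural properties of the $\mathcal{R}$-transform—its behaviour under dilation and its additivity on free variables—to identify the law of $\X+\Y$, and then to confirm the quadratic relation by a direct computation on the explicit moment generating function (\ref{eq:generujacamomenty}).

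First I would record the dilation rule. Since each $R_k$ is $k$-linear, the cumulants are homogeneous, $R_k(\lambda\mathbb{W})=\lambda^kR_k(\mathbb{W})$ for a scalar $\lambda$, so the definition $\mathcal{R}_{\mathbb{W}}(z)=\sum_{i\geq 0}R_{i+1}(\mathbb{W})z^i$ gives $\mathcal{R}_{\lambda\mathbb{W}}(z)=\lambda\,\mathcal{R}_{\mathbb{W}}(\lambda z)$. Applying this with $\lambda=\sqrt{\alpha}$ and $\mathbb{W}=\X/\sqrt{\alpha}$, whose $\mathcal{R}$-transform is (\ref{eq:RTransormataGlowna}) with parameters $a/\sqrt{\alpha}$ and $b/\alpha$, the cancellations $\sqrt{\alpha}\,z\cdot a/\sqrt{\alpha}=az$ and $(\sqrt{\alpha}\,z)^2\,b/\alpha=bz^2$ collapse the dilated formula to
\begin{align}
\mathcal{R}_{\X}(z)=\frac{2\alpha z}{1-az+\sqrt{(1-az)^2-4z^2b}}=\alpha\,\mathcal{R}_{\mu_{a,b}}(z).\nonumber
\end{align}
The same computation with $\beta$ in place of $\alpha$ yields $\mathcal{R}_{\Y}(z)=\beta\,\mathcal{R}_{\mu_{a,b}}(z)$. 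This is precisely where the special choice of parameters $\mu_{a/\sqrt{\alpha},b/\alpha}$ and $\mu_{a/\sqrt{\beta},b/\beta}$ enters, and I expect this cancellation to be the conceptual heart of the argument.

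Next, since $\X$ and $\Y$ are free, the linearization property of the $\mathcal{R}$-transform gives
\begin{align}
\mathcal{R}_{\X+\Y}(z)=\mathcal{R}_{\X}(z)+\mathcal{R}_{\Y}(z)=(\alpha+\beta)\,\mathcal{R}_{\mu_{a,b}}(z)=\mathcal{R}_{\mu_{a,b}}(z),\nonumber
\end{align}
using $\alpha+\beta=1$. Because the $\mathcal{R}$-transform (equivalently, the sequence of free cumulants) determines the law of a compactly supported variable, $\X+\Y$ has distribution $\mu_{a,b}$, so its moment generating function $M(z)$ is the one displayed in (\ref{eq:generujacamomenty}).

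Finally I would verify the quadratic. Writing $P=z^2+az+b$ and $D=(1-za)^2-4z^2(1+b)$, formula (\ref{eq:generujacamomenty}) reads $2PM=1+2b+az-\sqrt{D}$; isolating the radical and squaring gives $PM^2-(1+2b+az)M+\tfrac{(1+2b+az)^2-D}{4P}=0$. The only nonroutine step is the algebraic identity $(1+2b+az)^2-D=4(1+b)P$, which reduces the constant term to exactly $1+b$ and produces (\ref{eq:TransMomentSplot}). This last verification is elementary but must be carried through with care so that the square root disappears cleanly.
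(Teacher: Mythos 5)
Your proof is correct. Note first that the paper itself does not prove this statement: it is quoted verbatim as Lemma 4.1 of \cite{BoBr}, so there is no in-paper argument to compare against. Your proposal supplies a complete, self-contained proof along the natural lines: the dilation identity $\mathcal{R}_{\lambda\mathbb{W}}(z)=\lambda\mathcal{R}_{\mathbb{W}}(\lambda z)$ (immediate from homogeneity of cumulants) shows that the particular rescaling of parameters $\mu_{a/\sqrt{\alpha},b/\alpha}$ is exactly the one making $\mathcal{R}_{\X}=\alpha\mathcal{R}_{\mu_{a,b}}$, additivity under freeness together with $\alpha+\beta=1$ identifies the law of $\X+\Y$ as $\mu_{a,b}$, and the final algebra is right: with $S=1+2b+az$, $P=z^2+az+b$, $D=(1-za)^2-4z^2(1+b)$ one checks $S^2-D=4(1+b)P$, so squaring $2PM-S=-\sqrt{D}$ yields $PM^2-SM+(1+b)=0$, which is \eqref{eq:TransMomentSplot}. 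The squaring is harmless here because you only need the implication in one direction. This is essentially the argument one would extract from the cited source, so there is nothing to object to; if anything, you might remark explicitly that the conclusion is stronger than the lemma states (namely that $\X+\Y\sim\mu_{a,b}$), which is the fact actually being encoded by the quadratic.
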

\section{Characterization of free Meixner laws }
The next lemma will be applied in the proof of the Theorem \ref{twr:1}.
\begin{lemm}

 If $\textrm{ }\X$ and $\Y$ are free independent and centered, then the condition $\beta R_{k}(\mathbb{X})=\alpha R_{k}(\mathbb{Y})$ for $\beta,\alpha>0$ and non-negative integers $k$ 
 is equivalent to 
\begin{align}
\tau(\mathbb{X}|(\mathbb{X}+\mathbb{Y})) =\frac{\alpha}{\alpha+\beta}(\mathbb{X}+\mathbb{Y}).\label{eq:WarunkowyPierwszyMoment}\end{align}
\begin{proof}
From the equation $\beta R_{k}(\mathbb{X})=\alpha R_{k}(\mathbb{Y})$ and from the  freeness of $\X$ and $\Y$ it stems that 
  \begin{align}
R_{k}(\mathbb{X})=\frac{\alpha}{\alpha+\beta}R_{k}(\mathbb{X}+ \mathbb{Y}). \label{eq:KumulantyZaleznoscX}
\end{align}
Analogously  we get
   \begin{align}
R_{k}(\mathbb{Y})=\frac{\beta}{\alpha+\beta}R_{k}(\mathbb{X}+ \mathbb{Y}). \label{eq:KumulantyZaleznoscY}
\end{align}
This gives 
\begin{align}
\tau(\mathbb{X}(\mathbb{X}+\mathbb{Y})^{n}) &= \sum_{\nu \in NC(n+1)}R_{\nu}(\mathbb{X},\mathbb{X}+\mathbb{Y},\dots ,\mathbb{X}+\mathbb{Y}) \nonumber \\&
=\frac{\alpha}{\alpha+\beta} \sum_{\nu \in NC(n+1)}R_{\nu}(\mathbb{X}+\mathbb{Y},\mathbb{X}+\mathbb{Y},\dots ,\mathbb{X}+\mathbb{Y}) \nonumber \\&
=\tau(\frac{\alpha}{\alpha+\beta}(\mathbb{X}+\mathbb{Y})(\mathbb{X}+\mathbb{Y})^{n})
\end{align}
which, by Lemma \ref{lem:1}, implies that $\tau(\mathbb{X}|(\mathbb{X}+\mathbb{Y}))=\frac{\alpha}{\alpha+\beta}(\mathbb{X}+\mathbb{Y})$.
Let's suppose that the assertion (\ref{eq:WarunkowyPierwszyMoment}) is true. Then we use first part of the proof of Theorem 3.2 from the article \cite{BoBr}. From this proof (the first part, by induction)  we can deduce that the condition (\ref{eq:WarunkowyPierwszyMoment}) implies $\beta R_{k}(\mathbb{X})=\alpha R_{k}(\mathbb{Y})$.
\end{proof} \label{rem:1}
\end{lemm}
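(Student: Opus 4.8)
The statement is an equivalence, so the plan is to establish the two implications separately, the common engine being the vanishing of mixed free cumulants together with the additivity $R_k(\X+\Y)=R_k(\X)+R_k(\Y)$. First I would record the structural fact that drives everything: since $\X$ and $\Y$ are free, any cumulant $R_m$ evaluated on a string that mixes $\X$'s and $\Y$'s vanishes. Hence, when one expands $\tau(\X(\X+\Y)^n)=\sum_{\nu\in NC(n+1)}R_\nu(\X,\X+\Y,\dots,\X+\Y)$ by multilinearity, the block $B_1=B_1(\nu)$ carrying the index $1$ (whose argument is $\X$) collapses to $R_{|B_1|}(\X)$ — every mixed term dies — while each remaining block $B$, carrying only $\X+\Y$, contributes $R_{|B|}(\X+\Y)$. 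This reduces each moment to a manageable sum over non-crossing partitions.

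For the forward implication I assume $\beta R_k(\X)=\alpha R_k(\Y)$. Additivity then gives $R_k(\X+\Y)=\frac{\alpha+\beta}{\alpha}R_k(\X)$, i.e. $R_k(\X)=\frac{\alpha}{\alpha+\beta}R_k(\X+\Y)$. Feeding this into the expansion above, each summand $R_{|B_1|}(\X)\prod_{B\neq B_1}R_{|B|}(\X+\Y)$ equals $\frac{\alpha}{\alpha+\beta}R_\nu(\X+\Y,\dots,\X+\Y)$, so summing over $NC(n+1)$ yields $\tau(\X(\X+\Y)^n)=\frac{\alpha}{\alpha+\beta}\tau((\X+\Y)^{n+1})$ for every $n$. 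Applying Lemma \ref{lem:1} with $\mathbb{U}=\X$, $\mathbb{V}=\X+\Y$ and $\mathbb{W}=\frac{\alpha}{\alpha+\beta}(\X+\Y)$ then delivers (\ref{eq:WarunkowyPierwszyMoment}).

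For the converse I set $S=\X+\Y$ and $\lambda=\frac{\alpha}{\alpha+\beta}$, and first convert (\ref{eq:WarunkowyPierwszyMoment}) into the moment identities $\tau(\X S^n)=\lambda\,\tau(S^{n+1})$ for all $n\ge0$, directly from the defining property of the conditional expectation. Expanding both sides with the block observation, the identity for each $n$ becomes
\[
\sum_{\nu\in NC(n+1)}\bigl(R_{|B_1|}(\X)-\lambda R_{|B_1|}(S)\bigr)\prod_{B\neq B_1}R_{|B|}(S)=0.
\]
Setting $d_k:=R_k(\X)-\lambda R_k(S)$, I would show $d_k=0$ for all $k$ by induction on $k$, exactly the induction used in the first part of the proof of Theorem 3.2 in \cite{BoBr}. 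The base case $d_1=0$ is centering, since $R_1(\X)=\tau(\X)=0$ and $R_1(S)=0$. For the step, take $n=k-1$: in $NC(k)$ the one-block partition contributes the isolated term $d_k$ (empty product), while every other partition has $|B_1|<k$ and hence contributes $d_{|B_1|}=0$ by hypothesis; thus $d_k=0$. Finally $R_k(\X)=\lambda R_k(S)=\frac{\alpha}{\alpha+\beta}\bigl(R_k(\X)+R_k(\Y)\bigr)$ rearranges to $\beta R_k(\X)=\alpha R_k(\Y)$.

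The most delicate point is the bookkeeping in the converse: one must check that for fixed $n$ the top term $d_{n+1}$ is genuinely isolated and that all lower terms are killed — not merely rearranged — by the induction hypothesis, which is what makes the strong induction close. The forward direction, by contrast, is a direct computation once the mixed-cumulant collapse and the additivity relation are in hand; there I expect no real obstacle.
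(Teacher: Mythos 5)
Your proof is correct and follows essentially the same route as the paper: the forward implication is the identical computation (mixed cumulants vanish, the block containing the $\X$-slot collapses to $R_{|B_1|}(\X)$, the proportionality $R_k(\X)=\frac{\alpha}{\alpha+\beta}R_k(\X+\Y)$ is substituted, and Lemma \ref{lem:1} finishes). For the converse the paper merely cites the induction from the first part of the proof of Theorem 3.2 in \cite{BoBr}, whereas you carry out that same induction explicitly (isolating $d_{n+1}$ via the one-block partition of $NC(n+1)$), which is a correct and self-contained rendering of the cited argument rather than a different approach.
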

The main result of this paper is the following characterization of free Meixner laws in the terms of the cubic polynomial condition for conditional moments. 
\begin{theo}[]
Suppose that $\mathbb{X}$,$\mathbb{Y}$  are free, self-adjoint, non-degenerate, centered $(\tau(\mathbb{X})=\tau(\mathbb{Y})=0)$ and $\tau(\mathbb{X}^2+\mathbb{Y}^2)=1$. Then $\mathbb{X}/\sqrt{\alpha}$ and  $\mathbb{Y}/\sqrt{\beta}$ have the free Meixner  laws $\mu_{a/\sqrt{\alpha},b/\alpha}$ and  $\mu_{a/\sqrt{\beta},b/\beta}$, respectively, where $a \in \mathbb{R},b\geq -1$ if and only if  
\begin{align}
\tau(\mathbb{X}|(\mathbb{X}+\mathbb{Y})) ={\alpha}(\mathbb{X}+\mathbb{Y}) \label{eq:WarunkowyPierwszyMomentMainTwr}\end{align} 
and
\begin{align} 
&\tau((\beta\mathbb{X}-\alpha\mathbb{Y})(\mathbb{X}+\mathbb{Y})(\beta\mathbb{X}-\alpha\mathbb{Y})|(\mathbb{X}+\mathbb{Y}))\nonumber \\ &=
 \frac{\alpha\beta}{(b+1)^2} (b^2(\mathbb{X}+\mathbb{Y})^{3}+2ba(\mathbb{X}+\mathbb{Y})^{2}+(b+a^2)(\mathbb{X}+\mathbb{Y})+a\mathbb{I})    \label{eq:warjancjawarunkowa2}
\end{align} 
for some  $\alpha,\beta > 0 $ and $\alpha+\beta = 1$.
Additionally, we assume that $b\geq max\{-\alpha, -\beta\}$ if $b<0$ (the free binomial case).
\label{twr:1}
 \end{theo}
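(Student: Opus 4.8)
The plan is to reduce both implications to a single chain of equivalences expressed through the free cumulants of the sum $\mathbb{S}:=\mathbb{X}+\mathbb{Y}$, and to show that the cubic condition (\ref{eq:warjancjawarunkowa2}) is, after passing to generating functions, equivalent to the quadratic equation (\ref{eq:TransMomentSplot}) which by Lemma \ref{lem:3} together with (\ref{eq:generujacamomenty}) characterizes the free Meixner law. Write $\mathbb{S}:=\mathbb{X}+\mathbb{Y}$ and $\mathbb{W}:=\beta\mathbb{X}-\alpha\mathbb{Y}$, and recall $\alpha+\beta=1$. The first step is to note that by Lemma \ref{rem:1} the linear condition (\ref{eq:WarunkowyPierwszyMomentMainTwr}) is equivalent to $\beta R_k(\mathbb{X})=\alpha R_k(\mathbb{Y})$ for all $k$, hence, using freeness and $R_k(\mathbb{S})=R_k(\mathbb{X})+R_k(\mathbb{Y})$, to the proportionality $R_k(\mathbb{X})=\alpha R_k(\mathbb{S})$ and $R_k(\mathbb{Y})=\beta R_k(\mathbb{S})$. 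Throughout I use $m_i:=\tau(\mathbb{S}^i)$, with $m_0=1$, $m_1=0$ and $m_2=\tau(\mathbb{X}^2+\mathbb{Y}^2)=1$ from the normalization.

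The combinatorial core is the evaluation of $\tau(\mathbb{W}\mathbb{S}\mathbb{W}\mathbb{S}^n)$ under this proportionality. Expanding by the moment-cumulant formula (\ref{eq:DefinicjaKumulant}) over $NC(n+3)$ and using multilinearity together with the vanishing of mixed free cumulants, each block $B$ contributes a factor $\beta^{w(B)}R_{|B|}(\mathbb{X})+(-\alpha)^{w(B)}R_{|B|}(\mathbb{Y})=[\alpha\beta^{w(B)}+\beta(-\alpha)^{w(B)}]R_{|B|}(\mathbb{S})$, where $w(B)$ is the number of the two $\mathbb{W}$-slots (positions $1$ and $3$) lying in $B$. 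This bracket equals $1$ when $w(B)=0$, vanishes when $w(B)=1$, and equals $\alpha\beta$ when $w(B)=2$. Hence only partitions in which positions $1$ and $3$ share a block survive, each contributing $\alpha\beta R_\pi(\mathbb{S})$; and by non-crossingness the intermediate slot $2$ is then either in that same block or a singleton, the latter carrying a factor $R_1(\mathbb{S})=0$. This collapses the sum to the partitions with $1,2,3$ in one block, yielding the clean identity
\begin{align}
\tau(\mathbb{W}\mathbb{S}\mathbb{W}\mathbb{S}^n)=\alpha\beta\, c^3_n(\mathbb{S}).\nonumber
\end{align}

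Next I would translate the cubic condition itself. By Lemma \ref{lem:1}, (\ref{eq:warjancjawarunkowa2}) is equivalent to $\tau(\mathbb{W}\mathbb{S}\mathbb{W}\mathbb{S}^n)=\frac{\alpha\beta}{(b+1)^2}(b^2 m_{n+3}+2ab\,m_{n+2}+(b+a^2)m_{n+1}+a\,m_n)$ for all $n\ge 0$; combined with the identity above and cancelling $\alpha\beta$, this reads $c^3_n(\mathbb{S})=\frac{1}{(b+1)^2}(b^2 m_{n+3}+\cdots+a\,m_n)$. Multiplying by $z^{n+3}$ and summing, the right-hand side becomes an explicit affine expression in $M(z)$, while the left-hand side is $C^{(3)}(z)$. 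On the other hand, the recurrences of the Example, equations (\ref{eq:exemK=1})--(\ref{eq:exemK=2}), with $R_1(\mathbb{S})=0$ and $R_2(\mathbb{S})=1$ give $C^{(2)}=1-1/M$ and $C^{(3)}=1/M-1/M^2-z^2$. Equating the two expressions for $C^{(3)}(z)$ and clearing denominators produces a cubic in $M$ which I expect to factor as
\begin{align}
\big[(z^2+az+b)M^2-(1+az+2b)M+(1+b)\big]\big[(az+b)M+(b+1)\big]=0.\nonumber
\end{align}
Since $M(0)=1$ (together with $m_2=1$) forces the linear factor $(az+b)M+(b+1)$ to be nonvanishing — here the hypothesis $b\ge\max\{-\alpha,-\beta\}$ in the binomial case guarantees $b>-1$, so $(b+1)^2\neq0$ and the scaled laws are well defined — the moment generating function of $\mathbb{S}$ must satisfy the quadratic (\ref{eq:TransMomentSplot}); solving it and matching the discriminant with (\ref{eq:generujacamomenty}) (the converse content of Lemma \ref{lem:3}) gives exactly $\mathbb{S}\sim\mu_{a,b}$.

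Finally I would close the loop in both directions, every manipulation above being an equivalence. Reading the chain forward: (\ref{eq:WarunkowyPierwszyMomentMainTwr}) gives the cumulant proportionality, (\ref{eq:warjancjawarunkowa2}) then gives $\mathbb{S}\sim\mu_{a,b}$, whence $\mathcal{R}_{\mathbb{X}}=\alpha\mathcal{R}_{\mu_{a,b}}$ and $\mathcal{R}_{\mathbb{Y}}=\beta\mathcal{R}_{\mu_{a,b}}$; the scaling relation $\mathcal{R}_{c\mathbb{V}}(z)=c\,\mathcal{R}_{\mathbb{V}}(cz)$ then identifies $\mathbb{X}/\sqrt{\alpha}\sim\mu_{a/\sqrt{\alpha},b/\alpha}$ and $\mathbb{Y}/\sqrt{\beta}\sim\mu_{a/\sqrt{\beta},b/\beta}$. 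Conversely, if the two marginals are free Meixner of the stated type, the same scaling gives $\mathcal{R}_{\mathbb{X}}=\alpha\mathcal{R}_{\mu_{a,b}}$ and $\mathcal{R}_{\mathbb{Y}}=\beta\mathcal{R}_{\mu_{a,b}}$, so the proportionality and $\mathbb{S}\sim\mu_{a,b}$ hold and both (\ref{eq:WarunkowyPierwszyMomentMainTwr}) and (\ref{eq:warjancjawarunkowa2}) follow. I expect the main obstacle to be twofold: first, the careful bookkeeping that collapses the non-crossing sum to $\alpha\beta\,c^3_n(\mathbb{S})$ (the block-coefficient computation and the non-crossing argument that slot $2$ is either co-blocked or a singleton killed by $R_1(\mathbb{S})=0$); and second, verifying the exact factorization of the cubic in $M$, namely that the spurious linear factor splits off and the remaining quadratic is precisely (\ref{eq:TransMomentSplot}).
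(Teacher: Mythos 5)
Your proposal is correct and follows essentially the same route as the paper: Lemma \ref{rem:1} converts the linear condition into the cumulant proportionality, the block-by-block coefficient $\alpha\beta^{w(B)}+\beta(-\alpha)^{w(B)}$ collapses the non-crossing sum to $\alpha\beta\,c^3_n(\mathbb{X}+\mathbb{Y})$ exactly as in \eqref{eq:zalozeniekumulanty}--\eqref{eq:LaczenieKumulantOrazGlownegowyniku}, and the generating-function identities of the Example yield the same cubic in $M$ with the same factorization. The only point to tighten is the dismissal of the linear factor $(az+b)M+(b+1)$: at $z=0$ it equals $2b+1$, so it can vanish identically when $b=-1/2$, and that case must be excluded (as the paper does) by observing it forces a degenerate (Dirac) law, not merely by noting $b>-1$.
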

  \begin{Rem} In commutative probability equation \eqref{eq:warjancjawarunkowa2} takes the form:
\begin{align}
\tau((\beta\mathbb{X}-\alpha\mathbb{Y})(\mathbb{X}+\mathbb{Y})(\beta\mathbb{X}-\alpha\mathbb{Y})|(\mathbb{X}+\mathbb{Y}))\nonumber =c(\mathbb{X}+\mathbb{Y})^{3}+d(\mathbb{X}+\mathbb{Y})^{2}+e(\mathbb{X}+\mathbb{Y})
\end{align}
for some $c,d,e \in \mathbb{R}$, which is equivalent to the assumption that the conditional variance is quadratic.  There are also a higher degree polynomial
regression studied in commutative probability, see e.g. \cite{Bar-Lev1,Bar-Lev2,FosamShanbhag1,Gordon}. They proved that some classical random variable can be characterized by the higher degree polynomial but in a different context as presented in this article. 
In free probability the result \eqref{eq:warjancjawarunkowa2} is in some way unexpected.
As an argument we can give the Wigner's semicircle law variables. Suppose that $\mathbb{X}$,$\mathbb{Y}$  are free, self-adjoint, non-degenerate, centered $(\tau(\mathbb{X})=\tau(\mathbb{Y})=0)$, $\tau(\mathbb{X}^2)=\tau(\mathbb{Y}^2)=1$ and have the same distribution.  
Then the following statements are equivalent:
\begin{itemize}
\item $\mathbb{X}$ and  $\mathbb{Y}$ have the Wigner's semicircle law,
\item $\tau((\mathbb{X}-\Y)^2|\mathbb{X}+\mathbb{Y}) =2\mathbb{I}$ -- which follows from the main Theorem of \cite{BoBr} and \cite{Ejs},
 
\item 
$ 
\tau((\mathbb{X}-\mathbb{Y})(\mathbb{X}+\mathbb{Y})(\mathbb{X}-\mathbb{Y})|(\mathbb{X}+\mathbb{Y}))=\mathbb{O}$ -- which follows from the Theorem \ref{twr:1} ($a=b=0$).

\end{itemize}
Thus we see that the last  equation is unexpected, because in the classical case from $\tau((\mathbb{X}-\Y)^2|\mathbb{X}+\mathbb{Y}) =2$ we can easily deduce $ 
\tau((\mathbb{X}-\mathbb{Y})(\mathbb{X}+\mathbb{Y})(\mathbb{X}-\mathbb{Y})|(\mathbb{X}+\mathbb{Y}))=2(\mathbb{X}+\mathbb{Y})$, and in fact in noncommutative probability, the conditional expectation $ \tau(\X \Y \Z|\Y)$ is difficult to compute (if we know $ \tau(\X\Z|\Y)$).
 \end{Rem}
\noindent  \textit{Proof of Theorem \ref{twr:1}.}
$\Rightarrow$: Suppose that $\mathbb{X}/\sqrt{\alpha}$ and  ,$\mathbb{Y}/\sqrt{\beta}$ have respectively the free Meixner  laws $\mu_{a/\sqrt{\alpha},b/\alpha}$ and  $\mu_{a/\sqrt{\beta},b/\beta}$.
The condition \eqref{eq:WarunkowyPierwszyMomentMainTwr} holds because we can use Theorem 3.1 from
the article \cite{Ejs}.
 Then, from Lemma \ref{lem:3} the moment generating functions satisfy equation (\ref{eq:TransMomentSplot}). If in (\ref{eq:TransMomentSplot})  we multiply by $(1 -C^{(2)}(z))$ both sides and use the fact (\ref{eq:exemK=1}) with  $R_1(\X+\Y)=0$, we get
\begin{align}
 M(z)(b+za+z^2) -(2b+1+za)+(b+1)(1 -C^{(2)}(z))=0  \label{eq:pomcniczy1}
\end{align}
where $C^{(2)}(z)$ is a function for $\X+\Y$. Expanding $M(z)$ in the series ($M(z)=1+\sum_{i=1}^{\infty}z^{i}m_{i}$), we get 
\begin{align}
 bm_{n+2}+am_{n+1}+m_{n}=(b+1)c^{(2)}_n. \label{eq:Pomiedzyc_2Am_n}
\end{align}
Now we apply (\ref{eq:exemK=2}) to the equation (\ref{eq:pomcniczy1}) (using the assumption $R_2(\X+\Y)=1$) and after simple computations, we see 
that \begin{align}
 M(z)(b+za+z^2) -(b+za)=(b+1)(M(z)C^{(3)}(z)+z^2M(z))
\end{align}
or equivalently:
\begin{align}
  b+za-z^2b -\frac{(b+za)}{M(z)}=(b+1)C^{(3)}(z)
\end{align}
for $|z|$ small enough. Then using  (\ref{eq:exemK=12}) we have 
\begin{align}
  -z^2b +(b+za)C^{(2)}(z)=(b+1)C^{(3)}(z).
\end{align}
Expanding the  above equation in series, we get

\begin{align}
 bc^{(2)}_{n+1} +ac^{(2)}_n=c^{(3)}_n(b+1),
\end{align}
and using (\ref{eq:Pomiedzyc_2Am_n}) we obtain
\begin{align}
 b^2m_{n+3}+2bam_{n+2}+(b+a^2)m_{n+1}+am_{n} =c^{(3)}_n(b+1)^2. \label{eq:zaleznoscszereg_a3}
\end{align}
From the assumption of the main Theorem and Lemma \ref{rem:1} we get
\begin{align} R_{k}(\beta\mathbb{X}-\alpha\mathbb{Y},\mathbb{X+Y},\mathbb{X+Y},\dots,\mathbb{X+Y})=\beta R_{k}(\X)-\alpha R_{k}(\Y)=0  \label{eq:zalozeniekumulanty}
\end{align}
and similarly for $k \geqslant 3$
\begin{align} 
&R_{k}(\beta\mathbb{X}-\alpha\mathbb{Y},\mathbb{X+Y},\beta\mathbb{X}-\alpha\mathbb{Y},\mathbb{X+Y},\dots,\mathbb{X+Y})=\beta^2 R_{k}(\X)+\alpha^2 R_{k}(\Y)\nonumber \\& =\beta \alpha R_{k}(\X+\Y). \label{eq:zalozeniekumulanty2}
\end{align}
Now we use the moment-cumulant formula (\ref{eq:DefinicjaKumulant})
 \begin{align}
&\tau((\beta\mathbb{X}-\alpha\mathbb{Y})(\mathbb{X+Y})(\beta\mathbb{X}-\alpha\mathbb{Y})(\mathbb{X+Y})^n)\nonumber \\ &=
\sum_{\nu \in NC(n+3)}R_{\nu}(\beta\mathbb{X}-\alpha\mathbb{Y},\mathbb{X+Y},\beta\mathbb{X}-\alpha\mathbb{Y},\underbrace{ \mathbb{X+Y},\mathbb{X+Y},\dots,\mathbb{X+Y}}_{n\textrm{-times}}) \nonumber \\& =
\sum_{\nu \in NC^3(n+3)}R_{\nu}(\beta\mathbb{X}-\alpha\mathbb{Y},\mathbb{X+Y},\beta\mathbb{X}-\alpha\mathbb{Y}, \mathbb{X+Y},\mathbb{X+Y},\dots,\mathbb{X+Y}) \nonumber\\& +\sum_{\nu \in NC(n+3) \diagdown NC^3(n+3)}R_{\nu}(\beta\mathbb{X}-\alpha\mathbb{Y},\mathbb{X+Y},\beta\mathbb{X}-\alpha\mathbb{Y}, \mathbb{X+Y},\mathbb{X+Y},\dots,\mathbb{X+Y}).\nonumber 
 \end{align}
Let us  look more closely at the second sum from the last equation. We have that either the first and the third elements are in different blocks, or they are in the same
block. In the first case,  the second sum (from the last equation) vanishes because we have \eqref{eq:zalozeniekumulanty}. On the other hand, if they are in the same block, the  sum disappears because then we have that $\tau(\X+\Y)=0$. So, by (\ref{eq:zalozeniekumulanty2}) we have
 \begin{align}
&\tau((\beta\mathbb{X}-\alpha\mathbb{Y})(\mathbb{X+Y})(\beta\mathbb{X}-\alpha\mathbb{Y})(\mathbb{X+Y})^n)\nonumber \\ &=
\alpha \beta\sum_{\nu \in NC^3(n+3)}R_{\nu}(\mathbb{X}+\mathbb{Y},\mathbb{X+Y},\mathbb{X}+\mathbb{Y},\underbrace{ \mathbb{X+Y},\mathbb{X+Y},\dots,\mathbb{X+Y}}_{n\textrm{-times}})\nonumber \\ &=\alpha \beta c^{(3)}_n .\label{eq:LaczenieKumulantOrazGlownegowyniku}
 \end{align}
Therefore the equation (\ref{eq:zaleznoscszereg_a3}) is equivalent to
\begin{align}
& \alpha\beta \tau(b^2(\mathbb{X}+\mathbb{Y})^{n+3}+2ba(\mathbb{X}+\mathbb{Y})^{n+2}+ (b+a^2)(\mathbb{X}+\mathbb{Y})^{n+1}+a(\mathbb{X}+\mathbb{Y})^{n} )\nonumber \\ & =\tau((\beta\mathbb{X}-\alpha\mathbb{Y})(\mathbb{X}+\mathbb{Y})(\beta\mathbb{X}-\alpha\mathbb{Y})(\mathbb{X}+\mathbb{Y})^{n})(b+1)^2. 
\end{align}
for all $n \geqslant 0$. Now we use Lemma \ref{lem:1} to get  (\ref{eq:warjancjawarunkowa2}).
%
 \\
$\Leftarrow$: Let's suppose now, that the equality (\ref{eq:WarunkowyPierwszyMomentMainTwr}) and (\ref{eq:warjancjawarunkowa2}) holds. Multiplying (\ref{eq:warjancjawarunkowa2}) by $(\mathbb{X}+\mathbb{Y})^n$ for $n \geqslant 0$ and applying $\tau(\cdot)$ we obtain (\ref{eq:zaleznoscszereg_a3}). Using the facts that $m_1 = 0$ and $m_2 = 1$, from (\ref{eq:zaleznoscszereg_a3}) we obtain

\begin{align}
 & b^2M(z)+2zbaM(z)+z^2(b+a^2)M(z)+z^3aM(z) \nonumber \\&
-b^2z^2-b^2-2zba-z^2(b+a^2)=C^{(3)}(z)(b+1)^2.
\end{align}
From (\ref{eq:exemK=2}) we get

\begin{align}
 & b^2M^2(z)+2zbaM^2(z)+z^2(b+a^2)M^2(z)+z^3aM^2(z) \nonumber \\ &
-(b^2z^2+b^2+2zba+z^2(b+a^2))M(z)+z^2M(z)(b+1)^2=C^{(2)}(z)(b+1)^2,
\end{align}
and from (\ref{eq:exemK=1}) we have
\begin{align}
& b^2M^3(z)+2zbaM^3(z)+z^2(b+a^2)M^3(z)+z^3aM^3(z) \nonumber \\&
-(b^2z^2+b^2+2zba+z^2(b+a^2))M^2(z)+z^2M^2(z)(b+1)^2=(b+1)^2 (M(z)-1),
\end{align}
or equivalently
\begin{align}
 &((b+za)M(z)+b+1)\times\nonumber \\&((b+za+z^2)M^2(z) - M(z)(2b+za+1)+b+1)=0.
\end{align}
Thus, we have found two solutions (if $a\neq 0$ and $b \neq 0$)
\begin{align}
 M(z)=-(b+1)/(b+za)  \end{align} or \begin{align}(b+za+z^2)M^2(z) - M(z)(2b+za+1)+b+1=0 \label{eq:GoodSolution}
\end{align}
but the first solution does not corresponds to probability measure (except for $b=-0.5$) because then $M(z)=-\frac{b+1}{b}+-\frac{b+1}{b}\sum_{i=1}^{\infty}(\frac{za}{b})^n$. If $b=-0.5$ then the solution corresponds to the Dirac measure at the point $-2a$. However, by the assumption that the variable is non-degenerate variable we reject this solution. 
Thus we have \eqref{eq:GoodSolution}  and Lemma \ref{lem:3} says that $\X$ and $\Y$ have the Meixner laws,
which completes the proof.\begin{flushright} $\square$ \end{flushright}

\noindent 
 A non-commutative stochastic process ($\X_{t}$) is a free L\'evy process, if it has free additive and stationary increments. For a more detailed discussion of free L\'evy processes we refer to \cite{BT}. Let us first recall 
some properties of free L\'evy processes which follow from 
\cite{BoBr}. If $(\X_t)$ is a free L\'evy process such that $\tau(\X_t) = 0$ and $\tau(\X^2_t)=t$ for all $t > 0$ then
  \begin{align} 
\tau(\mathbb{X}_{s}|\mathbb{X}_{u})=
 \frac{s}{u} \mathbb{X}_{u} \label{eq:WzorBozBryc}
\end{align}
for all $s<u$. This note allows to formulate the following proposition.

\begin{prop}[] Suppose that $(\mathbb{X}_{t\geq 0})$  is a free L\'evy process such as  $\tau(\X_t) = 0$ and $\tau(\X^2_t)=t$ for all $t > 0$. Then the increment $(\mathbb{X}_{t+s}-\mathbb{X}_{t})/\sqrt{s}$ $(t,s>0)$ has the free Meixner law $\mu_{a/\sqrt{s},b/s}$ (for some $b\geqslant 0$) if and only if for all $t < s$
  \begin{align} 
\tau(\mathbb{X}_{t}\mathbb{X}_{s}\mathbb{X}_{t}|\mathbb{X}_{s}) =
 \frac{(s-t)t}{s^2(b+s)^2} (b^2\mathbb{X}_{s}^{3}+2bas\mathbb{X}_{s}^{2}+(b+a^2)s^2\mathbb{X}_{s}+as^3\mathbb{I} )+\frac{t^2}{s^2}\mathbb{X}^3_{s}
\end{align} 
\label{twr:4}
 \end{prop}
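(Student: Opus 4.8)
The plan is to deduce this Proposition directly from Theorem \ref{twr:1} by taking $\mathbb{X}$ and $\mathbb{Y}$ to be the two increments of the process cut at the times $t<s$. Concretely, I would fix $t<s$ and set $\mathbb{X}=\mathbb{X}_t$ and $\mathbb{Y}=\mathbb{X}_s-\mathbb{X}_t$. Because the process has free and stationary increments, $\mathbb{X}$ and $\mathbb{Y}$ are free, and from $\tau(\mathbb{X}_u)=0$, $\tau(\mathbb{X}_u^2)=u$ one reads off $\tau(\mathbb{X})=\tau(\mathbb{Y})=0$, $\tau(\mathbb{X}^2)=t$ and $\tau(\mathbb{Y}^2)=s-t$. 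Since Theorem \ref{twr:1} is normalized to total variance $1$, I would pass to $\mathbb{X}'=\mathbb{X}_t/\sqrt{s}$ and $\mathbb{Y}'=(\mathbb{X}_s-\mathbb{X}_t)/\sqrt{s}$, so that $\mathbb{X}'+\mathbb{Y}'=\mathbb{X}_s/\sqrt{s}$ and $\tau(\mathbb{X}'^2+\mathbb{Y}'^2)=1$, with $\alpha=t/s$ and $\beta=(s-t)/s$.

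With this dictionary, the free Meixner hypothesis of the Proposition (that every normalized length-$\ell$ increment has law $\mu_{a/\sqrt{\ell},b/\ell}$, via stationarity) is exactly the statement that $\mathbb{X}'/\sqrt{\alpha}=\mathbb{X}_t/\sqrt{t}$ and $\mathbb{Y}'/\sqrt{\beta}=(\mathbb{X}_s-\mathbb{X}_t)/\sqrt{s-t}$ carry the free Meixner laws $\mu_{a'/\sqrt{\alpha},b'/\alpha}$ and $\mu_{a'/\sqrt{\beta},b'/\beta}$ with $a'=a/\sqrt{s}$ and $b'=b/s$; a short check confirms these parameter values are forced. The restriction $b\geq 0$ gives $b'\geq 0\geq\max\{-\alpha,-\beta\}$, so the extra hypothesis of Theorem \ref{twr:1} in the $b<0$ case never binds. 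Moreover, condition (\ref{eq:WarunkowyPierwszyMomentMainTwr}), namely $\tau(\mathbb{X}'\mid\mathbb{X}'+\mathbb{Y}')=\alpha(\mathbb{X}'+\mathbb{Y}')$, holds automatically: it is just the linear regression (\ref{eq:WzorBozBryc}), $\tau(\mathbb{X}_t\mid\mathbb{X}_s)=(t/s)\mathbb{X}_s$, divided by $\sqrt{s}$. Thus, in this setting, Theorem \ref{twr:1} reduces to the equivalence between the free Meixner property and the single cubic identity (\ref{eq:warjancjawarunkowa2}).

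It remains to rewrite (\ref{eq:warjancjawarunkowa2}) in terms of $\mathbb{X}_t$ and $\mathbb{X}_s$, and this algebraic translation is the main (though routine) step. First I would compute $\beta\mathbb{X}'-\alpha\mathbb{Y}'=\frac{s\mathbb{X}_t-t\mathbb{X}_s}{s\sqrt{s}}$ and $\mathbb{X}'+\mathbb{Y}'=\mathbb{X}_s/\sqrt{s}$, so that the left side of (\ref{eq:warjancjawarunkowa2}) equals $s^{-7/2}\,\tau\bigl((s\mathbb{X}_t-t\mathbb{X}_s)\mathbb{X}_s(s\mathbb{X}_t-t\mathbb{X}_s)\mid\mathbb{X}_s\bigr)$. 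Expanding the product and using that $\tau(\cdot\mid\mathbb{X}_s)$ is a bimodule map over the algebra generated by $\mathbb{X}_s$ together with (\ref{eq:WzorBozBryc}) — for instance $\tau(\mathbb{X}_t\mathbb{X}_s^2\mid\mathbb{X}_s)=\tau(\mathbb{X}_t\mid\mathbb{X}_s)\mathbb{X}_s^2=(t/s)\mathbb{X}_s^3$, and symmetrically for $\mathbb{X}_s^2\mathbb{X}_t$ — the remaining three terms collapse, leaving $s^{-7/2}\bigl(s^2\,\tau(\mathbb{X}_t\mathbb{X}_s\mathbb{X}_t\mid\mathbb{X}_s)-t^2\mathbb{X}_s^3\bigr)$.

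On the right side I would substitute $a'=a/\sqrt{s}$, $b'=b/s$, $\alpha\beta=t(s-t)/s^2$ and $(b'+1)^2=(b+s)^2/s^2$, and factor $s^{-7/2}$ out of the polynomial in $\mathbb{X}_s/\sqrt{s}$, obtaining $\tfrac{t(s-t)}{(b+s)^2}\,s^{-7/2}\bigl(b^2\mathbb{X}_s^3+2abs\mathbb{X}_s^2+(b+a^2)s^2\mathbb{X}_s+as^3\mathbb{I}\bigr)$. Cancelling the common factor $s^{-7/2}$, moving $t^2\mathbb{X}_s^3$ across and dividing by $s^2$ yields exactly the asserted formula for $\tau(\mathbb{X}_t\mathbb{X}_s\mathbb{X}_t\mid\mathbb{X}_s)$. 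Since every manipulation is reversible, reading the computation backwards turns the stated cubic formula into (\ref{eq:warjancjawarunkowa2}); applying Theorem \ref{twr:1} in both directions then gives the characterization for the fixed pair $(t,s)$, and letting $t,s$ range over all $0<t<s$ together with stationarity promotes the conclusion to every increment. The only real care needed is the noncommutative bookkeeping in the bimodule reduction, which is precisely where the free (rather than classical) nature of the problem makes the computation nontrivial.
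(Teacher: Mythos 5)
Your proposal is correct and follows essentially the same route as the paper: both reduce the Proposition to Theorem \ref{twr:1} via a rescaling of $\mathbb{X}_t$ and $\mathbb{X}_s-\mathbb{X}_t$ (the paper first derives an unnormalized version of \eqref{eq:warjancjawarunkowa2} and then specializes, while you normalize by $\sqrt{s}$ and translate back, which is the same computation in a different order), and both use the linear regression \eqref{eq:WzorBozBryc} to collapse the cross terms $\tau(\mathbb{X}_t\mathbb{X}_s^2|\mathbb{X}_s)$ and $\tau(\mathbb{X}_s^2\mathbb{X}_t|\mathbb{X}_s)$. Your bookkeeping of the parameters $a'=a/\sqrt{s}$, $b'=b/s$ and the factor $s^{-7/2}$ matches the paper's computation exactly.
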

\begin{Rem}
The existence of a free L\'evy process was demonstrate by Biane \cite{Bian2} who proved that from every infinitely divisible distribution we can construct a free L\'evy process. We assume that $b\geqslant 0$ in Proposition \ref{twr:4} because a free Meixner variable is infinitely divisible if and only if $b\geqslant 0$ (see \cite{AnMlotko,BoBr}).
\end{Rem}
\noindent  \textit{Proof of Proposition \ref{twr:4}}.
Let's rewrite Theorem \ref{twr:1} for the variables (non-degenerate) $\X$ and $\Y$ such that $\tau(\mathbb{X}^2)=\alpha$, $\tau(\mathbb{Y}^2)=\beta$ and $\tau(\mathbb{Y})=\tau(\mathbb{X})=0$. After a simple parameter normalization   ($\alpha$ by $\frac{\alpha}{\alpha+\beta}$, $\beta$ by $\frac{\beta}{\alpha+\beta}$, $a$ by $\frac{a}{\sqrt{\alpha+\beta}}$, $b$ by $\frac{b}{\alpha+\beta}$) we get that $\X/\sqrt{\alpha}=\frac{\X}{\sqrt{\alpha+\beta}}/\frac{\sqrt{\alpha}}{\sqrt{\alpha+\beta}}$ and $\Y/\sqrt{\beta}=\frac{\Y}{\sqrt{\alpha+\beta}}/\frac{\sqrt{\beta}}{\sqrt{\alpha+\beta}}$ have the free Meixner laws  $\mu_{a/\sqrt{\alpha},b/\alpha}$ and  $\mu_{a/\sqrt{\beta},b/\beta}$, respectively, if and only if (after a simple computation)
\begin{align} 
&\tau((\beta\mathbb{X}-\alpha\mathbb{Y})(\mathbb{X}+\mathbb{Y})(\beta\mathbb{X}-\alpha\mathbb{Y})|(\mathbb{X}+\mathbb{Y}))\nonumber \\ &=
 \frac{\alpha\beta}{(b+(\alpha+\beta))^2} (b^2(\mathbb{X}+\mathbb{Y})^{3}+2ba(\alpha+\beta)(\mathbb{X}+\mathbb{Y})^{2}\nonumber \\ &+(\alpha+\beta)^2(b+a^2)(\mathbb{X}+\mathbb{Y})+a(\alpha+\beta)^3\mathbb{I} ).    \label{eq:warjancjaProces}
\end{align}  
i.e. we apply  Theorem  \ref{twr:1} with $\X$ equal $\frac{\X}{\sqrt{\alpha+\beta}}$ and $\Y$  equal $\frac{\Y}{\sqrt{\alpha+\beta}}$ and the parameters  mentioned above in the brackets. 
Now we consider  two variables $\mathbb{X}_{t}/\sqrt{t}$ and $(\mathbb{X}_{s}-\mathbb{X}_{t})/\sqrt{s-t}$, which are free and centered. 
 Thus, the formula (\ref{eq:warjancjaProces}) tell us that  $\mathbb{X}_{t}/\sqrt{t}$ and $(\mathbb{X}_{s}-\mathbb{X}_{t})/\sqrt{s-t}$ ($\X=\X_t$, $\Y=\Y_t$, $\alpha=t$, $\beta=s-t$), have the free Meixner laws  $\mu_{a/\sqrt{t},b/t}$ and  $\mu_{a/\sqrt{s-t},b/(s-t)}$, respectively, if and only if
 \begin{align} 
\tau((t\mathbb{X}_{s}-s\mathbb{X}_{t})\mathbb{X}_{s}(t\mathbb{X}_{s}-s\mathbb{X}_{t})|\mathbb{X}_{s})&\stackrel{(\ref{eq:WzorBozBryc}) }{=}
t^2\mathbb{X}^3_{s}-t^2\mathbb{X}^3_{s} +s^2\tau(\mathbb{X}_{t}\mathbb{X}_{s}\mathbb{X}_{t}|\mathbb{X}_{s})-t^2\mathbb{X}^3_{s}\nonumber \\&=
 \frac{(s-t)t}{(b+s)^2} (b^2\mathbb{X}_{s}^{3}+2bas\mathbb{X}_{s}^{2}+(b+a^2)s^2\mathbb{X}_{s}+as^3\mathbb{I} ).
\end{align} 
Thus, the proposition holds.
\begin{flushright} $\square$ \end{flushright}
At the end of this section, we are coming to the following proposition.
\begin{prop}[]
Suppose that $(\mathbb{X}_{t\geq 0})$  is a free L\'evy process such that the increments $(\mathbb{X}_{t+s}-\mathbb{X}_{t})/\sqrt{s}$ $(t,s>0)$ have the free Meixner law $\mu_{a/\sqrt{s},b/s}$ (for some $b\geqslant 0$). Then 
\begin{align} 
\tau(\mathbb{X}^3_{t}|\mathbb{X}_{2t})&=\frac{1}{8(b+2t)^2} (b^2\mathbb{X}_{2t}^{3}+4bat\mathbb{X}_{2t}^{2}+4(b+a^2)t^2\mathbb{X}_{2t}+8at^3\mathbb{I} )\nonumber\\&+\frac{1}{8} \mathbb{X}^3_{2t}+\frac{1}{4(b+2t)}
 [4t^2\mathbb{X}_{2t}+2ta\mathbb{X}_{2t}^2+b\mathbb{X}_{2t}^{3}] .   
 \label{eq:RownaniePropozycja2}
\end{align} 

 \end{prop}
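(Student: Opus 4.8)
The plan is to derive \eqref{eq:RownaniePropozycja2} from Proposition \ref{twr:4} by linking the symmetric conditional moment $\tau(\mathbb{X}_t\mathbb{X}_{2t}\mathbb{X}_t|\mathbb{X}_{2t})$ to the desired $\tau(\mathbb{X}_t^3|\mathbb{X}_{2t})$. Throughout I would write $\mathbb{X}=\mathbb{X}_t$, $W=\mathbb{X}_{2t}$ and $\mathbb{Y}=W-\mathbb{X}$; by stationarity and freeness of the increments, $\mathbb{X}$ and $\mathbb{Y}$ are free, identically distributed, centered, each of variance $t$, so $R_k(W)=R_k(\mathbb{X})+R_k(\mathbb{Y})=2R_k(\mathbb{X})$, and $W/\sqrt{2t}$ has the free Meixner law $\mu_{a/\sqrt{2t},b/(2t)}$. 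Since $\mathbb{X}W\mathbb{X}=\mathbb{X}^3+\mathbb{X}\mathbb{Y}\mathbb{X}$ one has the trivial moment identity $\tau(\mathbb{X}^3 W^n)+\tau(\mathbb{X}\mathbb{Y}\mathbb{X}\,W^n)=\tau(\mathbb{X}W\mathbb{X}\,W^n)$, whose right side is supplied, after setting $s=2t$, by Proposition \ref{twr:4}. Thus everything reduces to evaluating the antisymmetric combination $\tau\big((\mathbb{X}^3-\mathbb{X}\mathbb{Y}\mathbb{X})W^n\big)$, after which Lemma \ref{lem:1} turns the two moment identities into the operator identity.

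The core step, and the one I expect to be the main obstacle, is the combinatorial identity
\[
\tau\big((\mathbb{X}^3-\mathbb{X}\mathbb{Y}\mathbb{X})W^n\big)=\tfrac12\,c^{(2)}_{n+1},
\]
proved from \eqref{eq:DefinicjaKumulant} in the spirit of \eqref{eq:LaczenieKumulantOrazGlownegowyniku}. I would expand every factor $W=\mathbb{X}+\mathbb{Y}$ and use that mixed free cumulants of $\mathbb{X}$ and $\mathbb{Y}$ vanish, so that in $\sum_{\pi\in NC(n+3)}R_\pi$ only blocks monochromatic in the colours $\mathbb{X}/\mathbb{Y}$ survive; summing over the free colour choices on the blocks disjoint from $\{1,2,3\}$ converts each such block into $R_{|B|}(W)=2R_{|B|}(\mathbb{X})$, so a partition $\pi$ contributes $2^{-d(\pi)}R_\pi(W)$ times a consistency indicator, where $d(\pi)$ is the number of blocks meeting $\{1,2,3\}$. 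Classifying $\pi$ by the grouping of the positions $1,2,3$, three features do the work: (i) if $1$ and $3$ lie in a common block then non-crossingness forces $\{2\}$ to be a singleton, killed by $R_1(W)=\tau(W)=0$; (ii) as $\tau(\mathbb{X})=\tau(\mathbb{Y})=0$, every partition carrying a singleton drops out; (iii) the trace identity $\tau(\mathbb{X}^2\mathbb{Y}\,W^n)=\tau(\mathbb{Y}\mathbb{X}^2\,W^n)$ equates the two mixed classes ``$1\sim2\not\sim3$'' and ``$1\not\sim2\sim3$''. Taking the difference of the colourings $(\mathbb{X},\mathbb{X},\mathbb{X})$ and $(\mathbb{X},\mathbb{Y},\mathbb{X})$, all contributions cancel except those from partitions with $1$ and $2$ in one block, and these reassemble into $\tfrac12\sum_{\pi\in NC^2(n+3)}R_\pi(W)=\tfrac12\,c^{(2)}_{n+1}$.

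It then remains to express $c^{(2)}_{n+1}$ through the moments $m_k=\tau(W^k)$. Using $R_1(W)=0$ in \eqref{eq:exemK=1} gives $C^{(2)}(z)=1-1/M(z)$ for $M=M_W$, while rescaling the quadratic equation \eqref{eq:TransMomentSplot} of Lemma \ref{lem:3} to the variance-$2t$ variable $W$ shows that $M_W$ satisfies
\[
(4t^2z^2+2atz+b)M^2-(2t+2atz+2b)M+(2t+b)=0.
\]
Eliminating $1/M$ between these two relations and reading off the coefficient of $z^{n+3}$ yields the closed form
\[
(2t+b)\,c^{(2)}_{n+1}=b\,m_{n+3}+2at\,m_{n+2}+4t^2\,m_{n+1},
\]
so that $\tfrac14 c^{(2)}_{n+1}=\tau\!\big(\tfrac{1}{4(b+2t)}[\,b W^{3}+2at W^{2}+4t^{2}W\,]W^{n}\big)$, which is exactly the last summand of \eqref{eq:RownaniePropozycja2}.

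Finally I would assemble the pieces. Adding the trivial and combinatorial identities gives, for all $n\ge 0$, $\tau(\mathbb{X}^3 W^n)=\tfrac12\,\tau(\mathbb{X}W\mathbb{X}\,W^n)+\tfrac14 c^{(2)}_{n+1}$, whence Lemma \ref{lem:1} yields
\[
\tau(\mathbb{X}_t^3|\mathbb{X}_{2t})=\tfrac12\,\tau(\mathbb{X}_t\mathbb{X}_{2t}\mathbb{X}_t|\mathbb{X}_{2t})+\tfrac{1}{4(b+2t)}\big(b\mathbb{X}_{2t}^{3}+2at\mathbb{X}_{2t}^{2}+4t^{2}\mathbb{X}_{2t}\big).
\]
Substituting for $\tau(\mathbb{X}_t\mathbb{X}_{2t}\mathbb{X}_t|\mathbb{X}_{2t})$ the value from Proposition \ref{twr:4} at $s=2t$—whose $\tfrac12$-multiple produces precisely the first line of \eqref{eq:RownaniePropozycja2} together with the $\tfrac18\mathbb{X}_{2t}^3$ term—gives the claimed formula. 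The only delicate points are the bookkeeping in the colouring argument (making sure the singleton-driven cancellations and the symmetry of the two mixed classes are applied correctly) and the consistent rescaling of Lemma \ref{lem:3} to the variance-$2t$ variable $W$.
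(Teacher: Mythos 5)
Your argument is correct, but it follows a genuinely different route from the paper's. The paper first proves that the third conditional \emph{central} moment vanishes, $\tau\bigl((\mathbb{X}_t-\tfrac12\mathbb{X}_{2t})^3\,\big|\,\mathbb{X}_{2t}\bigr)=0$, by a direct cumulant argument (any cumulant in which $2t\mathbb{X}_t-t\mathbb{X}_{2t}$ appears once or three times vanishes thanks to $2tR_k(\mathbb{X}_t)=tR_k(\mathbb{X}_{2t})$), then expands $(2\mathbb{X}_t-\mathbb{X}_{2t})^3$ and solves for $\tau(\mathbb{X}_t^3|\mathbb{X}_{2t})$ using Proposition \ref{twr:4} for $\tau(\mathbb{X}_t\mathbb{X}_{2t}\mathbb{X}_t|\mathbb{X}_{2t})$ \emph{and} the quadratic conditional variance formula $\tau(\mathbb{X}_t^2|\mathbb{X}_{2t})$ imported from Proposition 3.2 of \cite{Ejs}. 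You instead split $\mathbb{X}_tW\mathbb{X}_t=\mathbb{X}_t^3+\mathbb{X}_t\mathbb{Y}\mathbb{X}_t$ and evaluate the antisymmetric part by a colouring argument, landing on $\tfrac12 c^{(2)}_{n+1}$, which you then convert to moments via the generating-function identities of Section 2; this replaces the external input from \cite{Ejs} by machinery already in the paper (your identity $(2t+b)c^{(2)}_{n+1}=bm_{n+3}+2atm_{n+2}+4t^2m_{n+1}$ is essentially the quadratic-variance statement in disguise), at the cost of a more delicate partition count. I verified your bookkeeping: the contribution of $\pi$ to $\tau(\mathbb{X}^3W^n)$ is $2^{-d(\pi)}R_\pi(W)$, the difference reduces to $\tfrac12 A+\tfrac14 B+\tfrac14 C$ over the classes $1\!\sim\!2\!\sim\!3$, $1\!\sim\!2\!\not\sim\!3$, $2\!\sim\!3\!\not\sim\!1$, and the rescalings and the final assembly with Proposition \ref{twr:4} at $s=2t$ reproduce \eqref{eq:RownaniePropozycja2} exactly. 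The one point you should tighten is step (iii): the equality $B=C$ does not follow from traciality alone (traciality relates $\tau(\mathbb{Y}\mathbb{X}^2W^n)$ to $\tau(\mathbb{X}^2W^n\mathbb{Y})$, not to $\tau(\mathbb{X}^2\mathbb{Y}W^n)$); the clean justification is the reflection $i\mapsto 4-i \pmod{n+3}$ of the cycle, which preserves non-crossingness and $R_\pi(W)$, fixes $2$, and swaps the roles of $1$ and $3$, hence bijects the two mixed classes. With that repair the proof is complete.
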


\begin{proof}Let $\mathbb{X}_{t}$ be as in the above proposition. 
First, we show that the third conditional central moment is equal to zero i.e. 
\begin{align} 
\tau((\mathbb{X}_{t}-\tau(\mathbb{X}_{t}|\mathbb{X}_{2t}))^3|\mathbb{X}_{2t})=0.
\end{align} 
for all $t>0$.
From the assumption we have $\tau((\mathbb{X}_{t}-\frac{t}{2t} \mathbb{X}_{2t}))^3|\mathbb{X}_{2t})=\tau((2t\mathbb{X}_{t}-t\mathbb{X}_{2t}))^3|\mathbb{X}_{2t})/(2t)^3$.  For this reason,  $\tau((2t\mathbb{X}_{t}-t\mathbb{X}_{2t}))^3\mathbb{X}^k_{2t})=0$ for all integers $k\geq 0$, by the relation $tR_k(\X_{2t})=2tR_k(\X_t)$. Indeed, if the first element ($2t\mathbb{X}_{t}-t\mathbb{X}_{2t}$) is  in the partition with  the element  from ''part'' $\mathbb{X}^k_{2t}$ only then  we have the cumulant 
\begin{align} &R_k(2t\X_t-t\X_{2t},\X_{2t},\dots,\X_{2t}) = {2t}R_k(\X_t,\X_{2t},\dots,\X_{2t})-tR_k(\X_{2t},\X_{2t},\dots,\X_{2t})
\nonumber \\&= {2t}R_k(\X_t,\X_{2t}-\X_t+\X_t,\dots,\X_{2t}-\X_t+\X_t)-tR_k(\X_{2t},\X_{2t},\dots,\X_{2t})
\nonumber \\&={2t}R_k(\X_t,\X_t,\dots,\X_t)-tR_k(\X_{2t},\X_{2t},\dots,\X_{2t})=0.
\end{align} 
Now, if the first element  is in the  partition with the second or  third element (but not simultaneously) then  cumulants are zero as well (by a similar argument presented above). Thus, the first three elements must be in the same block, so using the fact ${2t}\X_t-t\X_{2t}=t\X_t-t(\X_{2t}-\X_t)$ and $\X_{2t}=\X_{2t}-\X_t+\X_t$ ($\X_{2t}-\X_t$ and $\X_t$ are free) we obtain 
\begin{align} & R_k({2t}\X_t-t\X_{2t},{2t}\X_t-t\X_{2t},{2t}\X_t-t\X_{2t},\X_{2t},\dots,\X_{2t})
\nonumber \\&= t^3R_k(\X_t,\X_t,\dots,\X_t)-t^3R_k(\X_{2t}-\X_t,\X_{2t}-\X_t,\dots,\X_{2t}-\X_t)
\nonumber \\&=t^3R_k(\X_t)-t^3R_k(\X_{2t})+t^3R_k(\X_t)=t^2(2tR_k(\X_t)-tR_k(\X_{2t}))=0.
\end{align} 
From Lemma \ref{lem:1} we obtain $\tau((\mathbb{X}_{t}-\frac{1}{2} \mathbb{X}_{2t}))^3|\mathbb{X}_{2t})=0 $, or equivalently 
\begin{align} 0&=\tau((2\mathbb{X}_{t}- \mathbb{X}_{2t}))^3|\mathbb{X}_{2t})\nonumber \\&= \tau(8\mathbb{X}^3_{t}|\mathbb{X}_{2t})- 4\tau(\mathbb{X}^2_{t}|\mathbb{X}_{2t})\mathbb{X}_{2t} -4\mathbb{X}_{2t}\tau(\mathbb{X}^2_{t}|\mathbb{X}_{2t})-4\tau(\mathbb{X}_{t}\mathbb{X}_{2t}\mathbb{X}_{t}|\mathbb{X}_{2t})+2\mathbb{X}^3_{2t}.   
\label{eq:RownaniePropozycja2Pom}
\end{align}
To compute $\tau(\mathbb{X}_{t}\mathbb{X}_{2t}\mathbb{X}_{t}|\mathbb{X}_{2t})$  we use Proposition \ref{twr:4} and to compute the expression $\tau(\mathbb{X}^2_{t}|\mathbb{X}_{2t})$ we use  Proposition 3.2. from \cite{Ejs}. Here we don't cite  this proposition (Proposition 3.2. from the paper \cite{Ejs}), we  note only that if we know $\tau(\mathbb{X}_{t}\mathbb{X}_{2t}\mathbb{X}_{t}|\mathbb{X}_{2t})$  and $\tau(\mathbb{X}^2_{t}|\mathbb{X}_{2t})$ then from \eqref{eq:RownaniePropozycja2Pom} we can compute $\tau(\mathbb{X}_{t}^3|\mathbb{X}_{2t})$ (we skip simple calculations leading to the formula \eqref{eq:RownaniePropozycja2}). This completes the proof.
\end{proof}

\section{Some consequences for a $q$-Gaussian Random Variable }

In this section we consider a  mapping $\mathcal{H}\ni f \rightarrow G_{f}\in \mathcal{B}(\H)$ from a real
Hilbert space $\mathcal{H}$ into the algebra $\mathcal{B}(\H)$ of bounded operators acting on the space $\H$. We also use a parameter $q\in(-1,1)$.  We consider non-commutative random variables as the elements of the von Neumann algebra $\mathcal{A}$, generated by the bounded (i.e $-1<q<1$), self-adjoint operators $G_f$, with a state $\mathbb{E}:\mathcal{A} \rightarrow \mathbb{C}$. State $\mathbb{E}$ is a unital linear functional (which means that it preserves the identity), positive (which means  $\mathbb{E}(\X)\geqslant 0$  whenever $\X$ is a non-negative element of $\mathcal{A}$), faithful (which means  that if  $\mathbb{E}(\Y^*\Y)=0$ then $\Y = 0$), and not necessarily tracial. 
  In $(\mathcal{A},\mathbb{E})$ we refer to the self-adjoint elements of the algebra  $\mathcal{A}$ as random
variables. Similarly as in free probability any self-adjoint random variable $\X$ has a law: this is the unique compactly supported probability
measure $\mu$ on $\mathbb{R}$ which has the same moments as $\X$ i.e. $\tau(\X^n)=\int t^n d\mu(t)$, $n=1,2,3,\dots$. 
 
Denote by $P_n$ the lattice of all partitions of $\{1,\dots,n\}$. Fix a partition $\sigma \in P_n$, with blocks $\{B_1,\dots,B_k\}$. For a block $B$, denote by $a(B)$ its first element. Following \cite{Bian1}, we define the number of restricted crossings of a partition $\sigma$ as follows. For $B$ a block of $\sigma$ and $i \in B$, $i\neq a(B)$, denote $p(i) = max\{j \in B, j < i\}$. For two blocks $B,C \in \sigma$, a restricted crossing
is a quadruple ($p(i) < p(j) < i < j$) with $i \in B$, $j \in C$.The number of restricted crossings of $B,C$ is 
 \begin{align}
rc(B,C)= |\{i \in B, j \in C:p(i) < p(j) < i < j\}|+|\{j \in B, i \in C:p(j) < p(i) < j < i\}|,  \label{eq:RestrictedCrossings}
\end{align}
and the number of restricted crossings of $\sigma$  is $rc (\sigma ) =\Sigma_{ i<j} rc (B_i,B_j)$ (see also \cite{An2,An3}).
\begin{defi} For a seqence $A_{f_1},A_{f_2}\dots $ let $\textrm{ }\mathbb{C}\langle A_{f_1},A_{f_2},\dots,A_{f_n}\rangle$ denote the non-commutative ring of polynomials in variables $A_{f_1},A_{f_2},\dots,A_{f_n}$.
The $q$-deformed cumulants are the $k$-linear maps $R^{q}_{k} : \mathbb{C}\langle A_{f_1},A_{f_2},\dots,A_{f_n}\rangle  \to\mathbb{C}$ defined by the recursive formula 
 \begin{align}
\mathbb{E}(A_{f_1}\dots A_{f_n})= \sum_{\sigma \in P_n}q^{rc(\sigma)}\prod_{B \in \sigma } R^{q}_{|B|}(A_{f_i}:i \in B).     \label{eq:qCumulanty}
\end{align}
\end{defi}
\noindent To state Theorem \ref{twr:TwierdzenieDlaqasian} we shall need the following definition (see also \cite{An2,An3}). 
\begin{defi} Random variables $A_{f_1},A_{f_2},\dots,A_{f_n} $  are $q$-independent if, for every $n \geq 1$ and every non-constant choice of $\mathbb{Y}_{i} \in \{ A_{f_1},A_{f_2},\dots,A_{f_n} \}$, where $i \in \{1,\dots,k\}$ (for some positive integer  $k$) we get $R^{q}_{k}( \mathbb{Y}_{1},\dots ,\mathbb{Y}_{k} )=0$.
\end{defi}

\begin{defi} A family of self-adjoint operators $G_{f} =G_{f}^*$; $f \in \mathcal{H}$ is called $q$-Gaussian random variables if there exists a state $\mathbb{E}$ on the von Neumann algebra $\mathcal{A}$ (generated by $G_{f}$; $f \in \mathcal{H}$) such that the following  Wick formula holds:
 \begin{align}
\mathbb{E}(G_{f_1} \dots G_{f_n})=
\left\{ \begin{array}{ll} \sum_{\sigma \in P_2(n)}q^{rc(\sigma)}\prod_{j=1}^{n/2} \langle f_{j},f_{\sigma(j)} \rangle  & \textrm{if $n$ is even}
\\ 0 & \textrm{if $n$ is odd.}\\
\end{array} \right. \label{eq:qWickFormula}
\end{align}
where $P_2(n)$ is the set of 2-partitions of the set $\{1,2,\ldots,n\}$.
\end{defi}
The existence of such  random variables, far from being trivial, is ensured by Bo\.zejko and Speicher \cite{BS1}. Our assumptions on $\mathbb{E}$ do not allow us to use conditional expectations. In general, state $\mathbb{E}$ is not tracial so we do not know if conditional expectations exist. 
The following theorem is $q$-version of Theorem \ref{twr:1}.

%

\begin{theo}[]
Let $G_f$ and $G_g$ be two  $q$-independent variables with  
 $\mathbb{E}(G_f)=\mathbb{E}(G_g)=0$, $\mathbb{E}(G^2_f)=||f||^2=1$, $\langle f,g \rangle=0$, $\mathbb{E}(G^2_g)=||g||^2=1$ and $ R^{q}_{k}(G_g)=R^{q}_{k}(G_f)$ (for all integers $k\geqslant 0$ which means $G_f$ and $G_g$  have the same distribution)  then 
\begin{align}
\mathbb{E}((G_f-G_g)(G_f+G_g)(G_f-G_g)(G_f+G_g)^n)  =2q\mathbb{E}((G_f+G_g)^{n+1}) \textrm{ for all $n\geqslant 0$ }\label{eq:RownaniePomRodzial4}
\end{align} 
\label{twr:TwierdzenieDlaqasian}
if and only if  $G_f$ and $G_g$ are $q$-Gaussian random variables.
 \end{theo}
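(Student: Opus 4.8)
The plan is to treat both implications through a single combinatorial reduction and then split off the two directions. Write $U=G_f-G_g$, $V=G_f+G_g$ and set $s_k=R^{q}_{k}(V)$. The common first step is to expand the two moments in \eqref{eq:RownaniePomRodzial4} by the $q$-cumulant formula \eqref{eq:qCumulanty}, using multilinearity of $R^{q}_{k}$. For a block $B$ whose slots are filled by copies of $U$ and $V$, $q$-independence annihilates every genuinely mixed term, so $B$ contributes the coefficient of the all-$G_f$ choice, which is $1$, plus the coefficient of the all-$G_g$ choice, which is $(-1)^{u(B)}$ (the sign coming from the coefficient $-1$ of $G_g$ inside $U$), where $u(B)$ counts the $U$-slots of $B$. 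Since $R^{q}_{k}(G_f)=R^{q}_{k}(G_g)$, this contribution is $R^{q}_{|B|}(G_f)\,(1+(-1)^{u(B)})$: it vanishes when $u(B)$ is odd and equals $s_{|B|}=2R^{q}_{|B|}(G_f)$ when $u(B)$ is even. In $\mathbb{E}(UVUV^{n})$ the only $U$-slots are positions $1$ and $3$, so a partition survives only if $1$ and $3$ lie in one block, and then every block has even $u$. This gives
\begin{align}
\mathbb{E}(UVUV^{n})=\sum_{\substack{\sigma\in P_{n+3}\\ 1,3\text{ in one block}}}q^{rc(\sigma)}\prod_{B\in\sigma}s_{|B|},\qquad 2q\,\mathbb{E}(V^{n+1})=2q\sum_{\tau\in P_{n+1}}q^{rc(\tau)}\prod_{B\in\tau}s_{|B|},\nonumber
\end{align}
with $s_1=0$ (centering) and $s_2=2$ (since $\|f\|^2=\|g\|^2=1$ and $\langle f,g\rangle=0$).

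For the direction ($q$-Gaussian $\Rightarrow$ \eqref{eq:RownaniePomRodzial4}) I would use that $q$-Gaussianity is equivalent to $s_k=0$ for all $k\geq3$, so only pair partitions survive on both sides. On the left the pair containing $1$ must be $\{1,3\}$; erasing it yields an order-preserving bijection onto the pair partitions $\tau\in P_2(n+1)$ of the remaining slots. The pair $\{1,3\}$ has a restricted crossing with exactly the pair covering slot $2$ and with no other pair, so $rc(\sigma)=rc(\tau)+1$; together with $s_2=2$ this reproduces precisely the factor $2q$. When $n+1$ is odd both sides vanish for parity reasons.

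For the converse (\eqref{eq:RownaniePomRodzial4} $\Rightarrow$ $q$-Gaussian) I would assume the displayed identity for every $n\geq0$ and prove $s_k=0$ for $k\geq3$ by induction on $k$, the cases $k=1,2$ being already known. At stage $k$ specialise to $n=k-3$. By the inductive hypothesis $s_j=0$ for $3\leq j<k$, and since $s_1=0$, the only surviving partitions are the one-block partition of the $k$ points on the left (weight $s_k$, with $rc=0$) together with pair partitions on both sides. For $k$ odd no pair partition exists and the right-hand side vanishes, forcing $s_k=0$; for $k$ even the pair-partition contributions cancel by the same bijection and crossing count $rc(\sigma)=rc(\tau)+1$, again leaving $s_k=0$. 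Since $s_k=2R^{q}_{k}(G_f)=2R^{q}_{k}(G_g)$, all higher $q$-cumulants of $G_f$ and $G_g$ vanish, and the moment-cumulant formula \eqref{eq:qCumulanty} then collapses to the Wick formula \eqref{eq:qWickFormula}, i.e.\ $G_f,G_g$ are $q$-Gaussian.

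The main obstacle is the restricted-crossing bookkeeping. Unlike the free case, where only non-crossing partitions occur and the recursion of Lemma \ref{lem:2} is insensitive to crossings, here the weight $q^{rc(\sigma)}$ must be tracked exactly through the deletion of the block $\{1,3\}$ and through the identification of the surviving partitions in the induction. The delicate point is proving rigorously that inserting the pair $\{1,3\}$ around slot $2$ adds precisely one restricted crossing and disturbs none of the others, so that $rc(\sigma)=rc(\tau)+1$; this is the $q$-deformed analogue of the non-crossing identity underlying Theorem \ref{twr:1}, and it is where the factor $q$ on the right-hand side of \eqref{eq:RownaniePomRodzial4} originates.
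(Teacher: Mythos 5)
Your proof is correct and follows essentially the same route as the paper's: the forward direction rests on the same identification of the surviving pair partitions (those pairing $1$ with $3$) with $P_2(n+1)$, together with the observation that the block $\{1,3\}$ contributes exactly one extra restricted crossing (hence the factor $2q$), and the converse is the same induction showing $R^{q}_{k}(G_f+G_g)=0$ for all $k\geq 3$ by comparing the size-two and top-size cumulant contributions on the two sides of \eqref{eq:RownaniePomRodzial4}. The only cosmetic difference is that you run both implications through a single multilinear cumulant expansion, tracking the parity of the number of $U$-slots in each block, whereas the paper kills the unwanted partitions via $R^q_2(G_f-G_g,G_f+G_g)=0$ and invokes the Wick formula directly for the forward implication.
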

\begin{Rem}
If $G_f$ and $G_g$ are $q$-Gaussian random variables, then the state $\mathbb{E}$ is a trace and the conditional expectation exists  (see \cite{Br}). In this case  formula \eqref{eq:RownaniePomRodzial4} can be reformulated to
\begin{align}
\mathbb{E}((G_f-G_g)(G_f+G_g)(G_f-G_g)|(G_f+G_g))  =2q(G_f+G_g). 
\end{align} 
In particular, for $q=0$ we have the thesis of Theorem \ref{twr:1} is satisfied  for the variables with the same  distribution (only one way for $a=b=0$).
 \end{Rem}

\begin{Rem} The  Theorem \ref{twr:TwierdzenieDlaqasian} can be reformulated in the following form. 

Let $G_f$ and $G_g$ be two  $q$-independent variables and  
 $\mathbb{E}(G_f)=\mathbb{E}(G_g)=0$, $\mathbb{E}(G^2_f)=||f||^2$, $\langle f,g \rangle=0$, $\mathbb{E}(G^2_g)=||g||^2$ and $||f||^2 R^q_{k}(G_g)=||g||^2 R^q_{k}(G_f)$ (for all integer $k\geqslant 0$)  then 
\begin{align}
\mathbb{E}((||g||^2G_f-||f||^2G_g)(G_f+G_g)(||g||^2G_f-||f||^2G_g)(G_f+G_g)^n) \nonumber \\ =||g||^2+||f||^2)||f||^2||g||^2q\mathbb{E}((G_f+G_g)^{n+1}) \textrm{ for all $n\geqslant 0$ }
\end{align} 
if and only if  $G_f$ and $G_g$ are $q$-Gaussian random variables. The proof of this theorem is completely analogous with the one below, but is not as transparent as this expression presented beneath.
 \end{Rem}
\noindent  \textit{Proof of Theorem \ref{twr:TwierdzenieDlaqasian}.} From assumption of the Theorem \ref{twr:TwierdzenieDlaqasian}  above we deduce that $R^q_2(G_f)=||f||^2=1$, $R^q_2(G_g)=||g||^2=1$ and $R^q_2(G_f+G_g)=\langle f+g,f+g \rangle=2$.

 $\Leftarrow$:  Under the assumption that $G_f$ and $G_g$ are $q$-Gaussian random variables and let's compute 
\begin{align}
\mathbb{E}((G_f-G_g)(G_f+G_g)(G_f-G_g)(G_f+G_g)^n) \label{eq:RownaniePomRodzial5}.
\end{align}
From \eqref{eq:qWickFormula} we see that the both sides (left and right) of the main formula of Theorem \ref{twr:TwierdzenieDlaqasian}  are zero if $n$ is even thus we investigate only the case when $n$ is odd.  Since $R^q_2(G_f-G_g,G_f+G_g)=0$ (from the assumption $ R^q_{k}(G_g)= R^q_{k}(G_f)$) equation (\ref{eq:RownaniePomRodzial5}) equals $0$ if we consider 2-partitions  $\pi \in P_2(n+3)$ and $\pi= \{V_1, \dots,V_s\}$ where $V_1=\{1,k\}$ and $k\in \{2,4,5,\dots,n+3\}$ (if the first element is in  the 2-partition without third element). 
So we should analyse only this 2-partitions  $\pi \in P_2(n+3)$ such as  $\pi= \{V_1, \dots,V_s\}$ and $V_1=\{1,3\}$, see Figure \ref{fig:FiguraExemple3}. We denote this partitions by  $ P_2^{1,3}(n+3)$. Moreover this partitions  can be identified with the product $P_2(n+1)$ and $ P_2(2)$  multiplied by $q$ because if $1$ and $3$ are in the same block  we always  get one more crossing. 

\vspace{1cm}
\begin{figure}[h]
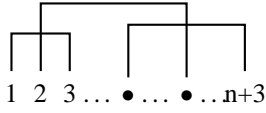

\begin{center}
\vspace{1cm}
\psset{nodesep=5pt}
\psset{angle=90}
\hspace{20mm}
\rnode{A}{\Node{1}} \rnode{A1}{\Node{2}}  \rnode{A2}{\Node{3}} \rnode{A6}{\Node{\dots}} \rnode{A7}{\Node{$\bullet$}} \rnode{A4}{\Node{\dots}} \rnode{A5}{\Node{$\bullet$}} \rnode{A8}{\Node{\dots}} \rnode{A9}{\Node{n+3}} 
\ncbar[armA=.5]{A}{A2}
\ncbar[armA=.9]{A1}{A5}
\ncbar[armA=.7]{A7}{A9}
\caption{The structure of crossing 2-partitions of $\{1, 2,3 ,\dots,n+3\}$ with $1$ and $3$ in the same block.}
\label{fig:FiguraExemple3}
\end{center}
\end{figure}

\noindent So if we use  \eqref {eq:qWickFormula} we get
\begin{flalign}
&\mathbb{E}((G_f-G_g)(G_f+G_g)(G_f-G_g)(G_f+G_g)^n)  \nonumber\\ &=
 \sum_{\sigma \in P_2^{1,3}(n+3)  }q^{rc(\sigma)}\langle f -g,f-g \rangle \langle f +g,f+g \rangle^{(n+1)/2}
\nonumber \\ &=
 2 q \sum_{\sigma \in P_2(n+1) }q^{rc(\sigma)} \langle f +g,f+g \rangle^{(n+1)/2}
 =2q\mathbb{E}((G_f+G_g)^{n+1}).
\end{flalign}
$\Rightarrow$:Suppose that  \eqref{eq:RownaniePomRodzial4} is true. 
Our proof relies on the observation that
\begin{align} R^q_{n+3}(G_f+G_g)=0 \label{eq:ZeroweKumulantyPomRodzial4}\end{align}  for all $n\geqslant 0$. We will prove
this by induction on the length of the cumulant.
Using the definition \eqref{eq:qCumulanty} and the assumption of $q$-independence and putting $n=0$ in \eqref{eq:RownaniePomRodzial4} we get 

\begin{align}
&\mathbb{E}((G_f-G_g)(G_f+G_g)(G_f-G_g))=R^q_{3}((G_f-G_g),(G_f+G_g),(G_f-G_g))\nonumber\\&=R^q_{3}((G_f+G_g),(G_f+G_g),(G_f+G_g))= 0. 
\end{align} 

We fix $k$ and suppose that \eqref{eq:ZeroweKumulantyPomRodzial4}  holds for all $n\in \{0,\dots,k\}$. Now we will prove equation \eqref{eq:ZeroweKumulantyPomRodzial4} for $n=k+1$. Expanding both sides of \eqref{eq:RownaniePomRodzial4}  into $q$-cumulants and using the fact that non-zero are only cumulants of size $2$ and $k+4$ we get  
\begin{align}
& \mathbb{E}((G_f-G_g)(G_f+G_g)(G_f-G_g)(G_f+G_g)^{k+1})\nonumber \\ &= \sum_{\sigma \in P_2(k+4)}q^{rc(\sigma)}R^q_2(G_f-G_g)\prod_{j=1}^{(k+2)/2} R^q_2(G_f+G_g)+R^q_{k+4}(G_f+G_g) \nonumber \\ &= 2q\sum_{\sigma \in P_2(k+2)}q^{rc(\sigma)}\prod_{j=1}^{(k+2)/2} R^q_2(G_f+G_g)+R^q_{k+4}(G_f+G_g) \nonumber \\  &\stackrel{\textrm{ right side of } \eqref{eq:RownaniePomRodzial4} }{=} 2q\mathbb{E}((G_f+G_g)^{k+2}) = 2q\sum_{\sigma \in P_2(k+2)}q^{rc(\sigma)}\prod_{j=1}^{(k+2)/2} R^q_2(G_f+G_g), 
\end{align} 
which  implies $R^q_{k+4}(G_f+G_g)=0$. Thus non-zero cumulants are only cumulants of size $2$ so we obtain that $G_f+G_g$ is a $q$-Gaussian random variable. From the assumption $ R^q_{k}(G_g)=R^q_{k}(G_f)$ we infer that $G_f$ and $G_g$ are $q$-Gaussian random variables as well. This completes the proof.
\begin{flushright} $\square$ \end{flushright}
%
\noindent \textbf{Open problems and remarks}
\begin{itemize}
\item In  Theorem \ref{twr:1} and Proposition  \ref{twr:4} of this paper we assume that the random variables are bounded that is $X_t \in \mathcal{A}$. It would be interesting  to show if this assumption can be replaced by $X_t \in L^2(\mathcal{A})$.
\item A version of Theorem \ref{twr:TwierdzenieDlaqasian} can be formulated for $q$-Poisson variables (see \cite{An2,An3}). The proof of this theorem is analogous with the proof of Theorem \ref{twr:TwierdzenieDlaqasian} (by induction). 
\item It would be worth to show whether  Proposition  \ref{twr:4} is true for non-commutative generalized stochastic processes with freely independent values, see \cite{BolyI,BolyII}.
\end{itemize}

\begin{center} Acknowledgments
\end{center}

The author would like to thank  M. Bo\.zejko, W. Bryc, M. Anshelevich, Z.Michna, W.M\l otkowski and J. Wysocza\'nski  for several discussions and helpful comments during the preparation of this paper. The author also thank especially the anonymous referee for very careful reading of the submitted manuscript.

\end{document}